\def\UrlAlphabet{%
	\do\a\do\b\do\c\do\d\do\e\do\f\do\g\do\h\do\i\do\j%
	\do\k\do\l\do\m\do\n\do\o\do\p\do\q\do\r\do\s\do\t%
	\do\u\do\v\do\w\do\x\do\y\do\z\do\A\do\B\do\C\do\D%
	\do\E\do\F\do\G\do\H\do\I\do\J\do\K\do\L\do\M\do\N%
	\do\O\do\P\do\Q\do\R\do\S\do\T\do\U\do\V\do\W\do\X%
	\do\Y\do\Z}
\def\UrlDigits{\do\1\do\2\do\3\do\4\do\5\do\6\do\7\do\8\do\9\do\0}
\g@addto@macro{\UrlBreaks}{\UrlOrds}
\g@addto@macro{\UrlBreaks}{\UrlAlphabet}
\g@addto@macro{\UrlBreaks}{\UrlDigits}
\numberwithin{equation}{section}
\newtheorem{theorem}{Theorem}[section]
\newtheorem{definition}[theorem]{Definition}
\newtheorem{corollary}[theorem]{Corollary}
\newtheorem{proposition}[theorem]{Proposition}
\newtheorem{lemma}[theorem]{Lemma}
\newtheorem{remark}[theorem]{Remark}
\newtheorem{example}[theorem]{Example}
\newcommand{\address}[1]{\gdef\@address{#1}}
\newcommand{\email}[1]{\gdef\@email{\url{#1}}}
\newcommand{\@endstuff}{\par\vspace{\baselineskip}\noindent\small
\begin{tabular}{@{}l}\scshape\@address\\\textit{E-mail address:} \@email\end{tabular}}
\title{Jordan Constant of $\mathrm{PGL}_3(K)$}
\author{Yijue Hu}
\address{Department of Mathematics, National Research University Higher School of Economics}
\email{yijue.hu@gmail.com}
\date{}
\begin{document}

\maketitle


\begin{abstract}
We determine the Jordan constants of groups $\mathrm{GL}_2(K)$, $\mathrm{SL}_2(K)$, $\mathrm{PGL}_2(K)$ and $\mathrm{PGL}_3(K)$ for any given field $K$ of characteristic 0.
\end{abstract}

\section{Introduction}


One of the commonly used approaches to study an infinite group is to study its finite subgroups. However, sometimes we want to avoid classifying all of its finite subgroups thoroughly because it may be too complicated. To do this, we will find the following definition useful, which originally comes from~\mbox{\cite[Definition 2.1]{popov}}. It measures to which extent a group is ``similar" to an abelian group.

\begin{definition}\label{def1.1}
For an arbitrary group $\Gamma$, its Jordan constant is defined as the minimal integer $J(\Gamma)$ such that any finite subgroup of $\Gamma$ has an abelian normal subgroup with index not exceeding $J(\Gamma)$.
\end{definition}

Throughout this paper, we work over a field $K$ of characteristic 0, unless stated otherwise. We use $\omega$ to denote a third primitive root of unity.

Our main result deals with the Jordan constant of $\mathrm{PGL}_3(K)$.

\begin{theorem}\label{thm1.4}
The Jordan constant of $\mathrm{PGL}_3(K)$ can attain only the following values: 360, 168, 60, 24, 12, 6. More explicitly:

(i) $J(\mathrm{PGL}_3(K))=360$ if and only if $\omega\in K$ and $\sqrt{5}\in K$;

(ii) $J(\mathrm{PGL}_3(K))=168$ if and only if at least one of $\omega$ and $\sqrt{5}$ is not in $K$, and $\sqrt{-7}\in K$;

(iii) $J(\mathrm{PGL}_3(K))=60$ if and only if $\sqrt{5}\in K$, but neither $\omega$ nor $\sqrt{-7}$ is in~$K$;

(iv) $J(\mathrm{PGL}_3(K))=24$ if and only if $\sqrt{5}\not\in K$, $\sqrt{-7}\not\in K$ and 

\hspace{20pt} (a) either $\omega\in K$, 

\hspace{20pt} (b) or $-1$ is the sum of two squares in $K$, and there exists $\zeta\in K$ such 

\hspace{37pt} that $2\zeta^2$ is a root of unity;

(v) $J(\mathrm{PGL}_3(K))=12$ if and only if $-1$ is the sum of two squares in $K$, none of $\omega, \sqrt{5}$, and $\sqrt{-7}$ lies in $K$, and for any element $\zeta\in K$, $2\zeta^2$ is not a root of unity;

(vi) $J(\mathrm{PGL}_3(K))=6$ if and only if $-1$ is not the sum of two squares in $K$, and none of $\omega, \sqrt{5}$, and $\sqrt{-7}$ lies in $K$.
\end{theorem}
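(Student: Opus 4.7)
The plan is to separate the computation of $J(\mathrm{PGL}_3(K))$ into two tasks: for each finite subgroup $G$ of $\mathrm{PGL}_3(\overline K)$ (up to conjugation) I determine the minimal index $j(G)$ of an abelian normal subgroup of $G$, and then I determine the precise arithmetic condition on $K$ that characterizes when $G$ is realizable inside $\mathrm{PGL}_3(K)$. The Jordan constant is then $\max\{j(G)\}$ over all $G$ realizable over $K$, and the six cases of the theorem are obtained by matching arithmetic hypotheses to realizable groups.

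I would first invoke the classical classification (Blichfeldt) of finite subgroups of $\mathrm{PGL}_3(\mathbb{C})$. Besides intransitive and imprimitive subgroups, both of which admit an abelian normal subgroup of index at most $6$ (the diagonal stabilizer modulo scalars, with quotient inside $S_3$), there are only finitely many primitive types: the simple groups $A_5,\ \mathrm{PSL}_2(7),\ A_6$ and the three Hessian groups $H_{36},H_{72},H_{216}$. Direct inspection of the subgroup lattices gives $j(A_5)=60$, $j(\mathrm{PSL}_2(7))=168$, $j(A_6)=360$, and $j(H_{36})=4$, $j(H_{72})=8$, $j(H_{216})=24$ (in all three Hessian cases the unique maximal abelian normal subgroup is $(\mathbb{Z}/3)^2$). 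In addition, among the subgroups of the Hessian and of $\mathrm{GL}_2\hookrightarrow\mathrm{GL}_3$ one finds $\mathrm{SL}_2(3)$ and $2.S_4$, whose centres are the unique non-trivial abelian normal subgroups, so $j(\mathrm{SL}_2(3))=12$ and $j(2.S_4)=24$.

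The heart of the proof is the realizability analysis. For the simple primitive groups this is a standard computation of character fields plus checking that the Schur index is trivial: $A_6$ over $\mathbb{Q}(\omega,\sqrt 5)$, $\mathrm{PSL}_2(7)$ over $\mathbb{Q}(\sqrt{-7})$, and $A_5$ over $\mathbb{Q}(\sqrt 5)$. For the Hessian group $H_{216}$ I would use that it contains $(\mathbb{Z}/3)^2$, and then show by an explicit centralizer calculation inside $K[T]$ (with $T$ the cyclic permutation matrix) that $\mathrm{PGL}_3(K)$ contains $(\mathbb{Z}/3)^2$ if and only if $\omega\in K$; hence $H_{216}\hookrightarrow\mathrm{PGL}_3(K)$ iff $\omega\in K$. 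For $\mathrm{SL}_2(3)$ and $2.S_4$ the relevant route is the reducible $2{+}1$ embedding $\mathrm{GL}_2(K)\hookrightarrow\mathrm{GL}_3(K)$; translating the corresponding Brauer obstruction for the two-dimensional faithful representation gives the condition that the Hamilton quaternion algebra $(-1,-1)_K$ split (equivalently, $-1$ is a sum of two squares in $K$), while the character field obstruction for $2.S_4$ reduces, after an appropriate twist, to the existence of $\zeta\in K$ with $2\zeta^2$ a root of unity (this is the projective form of requiring an order-$8$ element).

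With the $j$-values and realizability conditions in hand the theorem is a bookkeeping verification: in each of the six cases one exhibits a subgroup whose $j$-value matches the claimed constant and checks that every hypothesis which fails rules out every subgroup of strictly larger $j$. The principal obstacle, on which I expect most of the effort to concentrate, is the arithmetic half of (iv)(b) and (v): proving that the conditions ``$-1$ is a sum of two squares'' and ``$\exists\zeta\in K:\ 2\zeta^2\in\mu_\infty$'' are precisely the necessary and sufficient criteria distinguishing $J=24$ from $J=12$ from $J=6$ requires a careful descent/Galois-cohomology argument matching character-field and Brauer-group obstructions, and is the genuinely non-formal part of the proof.
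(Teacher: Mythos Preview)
Your overall strategy coincides with the paper's, and your realizability criteria for the primitive groups $A_5$, $\mathrm{PSL}_2(\mathbb F_7)$, $A_6$, and $H_{216}$ are exactly those the paper proves in Section~5. The phrasing via character fields, Schur indices, and the quaternion algebra $(-1,-1)_K$ is a more conceptual packaging of what the paper does by explicit matrices, but the content is the same.

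There is, however, a genuine error in your treatment of the non-primitive groups. You assert that \emph{intransitive} subgroups admit an abelian normal subgroup of index at most $6$ ``with quotient inside $S_3$''. That is true only for the imprimitive (monomial) types~(C),~(D) and for the diagonal type~(A). The intransitive groups of type~(B), those fixing a $2+1$ decomposition and hence coming from $\mathrm{GL}_2(K)\hookrightarrow\mathrm{PGL}_3(K)$, can have $j$-value $12$, $24$, or $60$: this is precisely where the arithmetic conditions in (iv)(b), (v), and (vi) originate. The paper handles this by a separate, complete determination of $J(\mathrm{GL}_2(K))$ (Theorem~\ref{thm1.3}), classifying finite $G\subset\mathrm{GL}_2(K)$ by the image $\pi(G)\in\{C_r,D_r,A_4,S_4,A_5\}$ and computing $j(G)$ in each case. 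You partly recover this by singling out $\mathrm{SL}_2(\mathbb F_3)$ and $2.S_4$, but you must treat the type~(B) family systematically: in particular you need the central extensions of $A_5$ (with $j=60$) and must check that they are dominated by the primitive $A_5$, since the realizability condition for the latter ($\sqrt5\in K$) is weaker. Note also that $2.S_4$ has order $48=2^4\cdot3$, hence cannot sit inside $H_{216}$ of order $2^3\cdot3^3$; it arises only through the $\mathrm{GL}_2$ route.

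Two smaller omissions: you do not record the lower bound $J\ge 6$, which the paper gets by observing that the standard $3$-dimensional permutation representation embeds $S_4$ into $\mathrm{PGL}_3(K)$ for every $K$; and you should note that the three Hessian types (E), (F), (G) all appear simultaneously (iff $\omega\in K$), so only $H_{216}$ is relevant.
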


\begin{example}\label{eg1.5}
We can consider the most popular fields:
$$J(\mathrm{PGL}_3(\mathbb{C}))=360,\  J(\mathrm{PGL}_3(\mathbb{R}))=60,\  J(\mathrm{PGL}_3(\mathbb{Q}))=6.$$
\end{example}


To prove Theorem 1.2, we find the necessary and sufficient conditions for the group $\mathrm{PGL}_3(K)$ to contain a given finite primitive subgroup (see Definition \ref{def4.1}). This may be considered as a partial generalization of \cite[Proposition 1.1]{Beauville}, where the necessary and sufficient conditions for the existence of a given finite subgroup in $\mathrm{PGL}_2(K)$ are found. We expect that our results can be used to determine the possible Jordan constant of the group of birational automorphisms of $\mathbb{P}^2$ over any field $K$ of characteristic 0, which is currently known only over $\mathbb{C}, \mathbb{R}$ and $\mathbb{Q}$, see \cite{Yasinsky}. 

As an auxiliary step of the proof of the main result, Jordan constant of~\mbox{$\mathrm{GL}_2(K)$} is determined in Section 3. We are also able to determine Jordan constants of $\mathrm{SL}_2(K)$ and $\mathrm{PGL}_2(K)$ as by-products of the proof of~\mbox{Theorem \ref{thm1.3}}. 

\begin{theorem}\label{thm1.3}
The Jordan constant of $\mathrm{GL}_2(K)$ can attain only the following values: 60, 24, 12, 2. More explicitly:

(i) $J(\mathrm{GL}_2(K))=60$ if and only if $-1$ is the sum of two squares in $K$ and~\mbox{$\sqrt{5}\in K$};

(ii) $J(\mathrm{GL}_2(K))=24$ if and only if $-1$ is the sum of two squares in~$K$,~\mbox{$\sqrt{5}\not\in K$} and there exists $\zeta\in K$ such that $2\zeta^2$ is a root of unity;

(iii) $J(\mathrm{GL}_2(K))=12$ if and only if $-1$ is the sum of two squares in~$K$,~\mbox{$\sqrt{5}\not\in K$} and for any element $\zeta\in K$, $2\zeta^2$ is not a root of unity;

(iv) $J(\mathrm{GL}_2(K))=2$ if and only if $-1$ is not the sum of two squares in $K$.
\end{theorem}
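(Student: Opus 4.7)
The plan is to combine the Klein--Dickson classification of finite subgroups of $\mathrm{PGL}_2(\overline{K})$ (cyclic, dihedral, $A_4$, $S_4$, $A_5$) with an analysis of which preimages actually lie in $\mathrm{GL}_2(K)$. For a finite subgroup $G\subset\mathrm{GL}_2(K)$, let $\bar G$ denote its image in $\mathrm{PGL}_2(K)$ and set $Z:=G\cap K^{*}I$; then $Z$ is a central (hence abelian normal) subgroup with $G/Z\cong\bar G$. The problem splits into two tasks: (a) for each type of $\bar G$, determine the minimum index of an abelian normal subgroup of the corresponding $G$; (b) determine the precise conditions on $K$ under which each such $G$ embeds in $\mathrm{GL}_2(K)$. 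The resulting list of possible indices $\{1,2,12,24,60\}$ will collapse to $\{2,12,24,60\}$ because $S_3\cong D_3$ always embeds in $\mathrm{GL}_2(K)$, forcing $J\ge 2$.

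For task (a), cyclic $\bar G$ forces $G$ itself to be abelian (index $1$), while dihedral $\bar G=D_n$ yields an abelian normal preimage of the cyclic index-$2$ subgroup. For $\bar G\in\{A_4,S_4,A_5\}$, I would first observe via character tables that none of these groups has a faithful two-dimensional representation in characteristic $0$: the only $2$-dimensional irreducible of $S_4$ has $V_4$ as kernel, while $A_4$ and $A_5$ have no $2$-dimensional irreducibles at all. Hence $G$ is a nontrivial central extension and must contain one of the binary polyhedral groups $2T$, $2O$, $2I$. Inspection of their normal-subgroup lattices shows that the only nontrivial abelian normal subgroup is the scalar center $\{\pm I\}$, and the same remains true for any $G$ obtained by enlarging the scalar part. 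This yields minimum indices $12$, $24$, $60$, respectively.

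For task (b), the key fact is that $Q_8\subset\mathrm{GL}_2(K)$ if and only if the quaternion algebra $(-1,-1)_K$ splits, equivalently $-1$ is a sum of two squares in $K$. Since $Q_8$ sits inside each of $2T,2O,2I$, this condition separates case (iv) from the others. Conversely, when $-1=a^2+b^2$, the binary tetrahedral $2T$ is realized (an order-$3$ element is always available in $\mathrm{GL}_2(K)$ through the characteristic polynomial $x^{2}+x+1$), giving $J\ge 12$. For $2O$ one additionally needs an order-$8$ element; its eigenvalues are primitive $8$th roots of unity, so its trace has the form $\pm\sqrt{2\eta}$ for some root of unity $\eta\in K$, which I would show is equivalent to the existence of $\zeta\in K$ with $2\zeta^{2}\in\mu_{\infty}$. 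For $2I$ one combines Beauville's criterion $A_5\subset\mathrm{PGL}_2(K)\iff\sqrt{5}\in K$ with the $Q_8\subset 2I$ requirement $-1=a^2+b^2$.

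I expect the main obstacle to be the sharpness of case (ii), namely that the $\zeta$-condition really is equivalent to realizing the full binary octahedral group rather than just an isolated order-$8$ element. Concretely, one must verify that when the arithmetic condition holds, an order-$8$ element can be assembled together with the quaternion triple so as to satisfy the defining relations of $2O$ over $K$, and conversely that without the $\zeta$-condition no such compatible assembly exists. I anticipate this will require explicit matrix constructions of the generators of $2O$ in $\mathrm{GL}_2(K)$ rather than abstract representation-theoretic manipulations.
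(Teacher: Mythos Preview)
Your overall plan matches the paper's: classify $\bar G=\pi(G)$ via the Klein--Dickson list, compute the minimal index of an abelian normal subgroup for each type, then pin down the arithmetic on $K$. The cyclic and dihedral cases are fine, and for $\bar G=A_5$ simplicity alone already gives $J(G)=60$; in fact your $2I$ claim happens to be correct there, since $A_5$ is perfect and $[G,G]\subset\mathrm{SL}_2(\overline K)$ is forced to equal $2I$.

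The genuine gap is the assertion, for $\bar G\in\{A_4,S_4\}$, that $G$ ``must contain one of the binary polyhedral groups''. This is false. Over $\mathbb C$, take the standard $2T\subset\mathrm{SL}_2(\mathbb C)$ with $2$-Sylow $Q_8$ and an element $c$ of order~$3$, let $\zeta_9$ be a primitive $9$th root of unity, and set $G=\langle Q_8,\,\zeta_9 c\rangle\subset\mathrm{GL}_2(\mathbb C)$. Then $\pi(G)=A_4$ and $|G|=72$; a $3$-Sylow is $\langle\zeta_9 c\rangle\cong C_9$, whose only elements of order $3$ are the scalars $\zeta_3 I$ and $\zeta_3^2 I$, so every order-$3$ element of $G$ is central and $G$ contains no copy of $2T$. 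Your phrase ``any $G$ obtained by enlarging the scalar part'' misses exactly such twisted extensions. What does survive is $Q_8\subset G$: one has $[G,G]\subset\mathrm{SL}_2(\overline K)$ surjecting onto $V_4$, and since $-I$ is the unique involution in $\mathrm{SL}_2$, this forces $[G,G]\supset Q_8$. From that one can still rule out an abelian normal $N$ with $\pi(N)=V_4$, because two lifts of independent generators of $V_4$ have anticommuting $Q_8$-parts. The paper carries this out by a direct matrix computation (Proposition~3.4): with the explicit generators $A,B$ of Proposition~3.3, any lift $h_1$ of $A$ and the conjugate lift $h_2=h'h_1h'^{-1}$ of $BAB^{-1}$ satisfy $h_1h_2=-h_2h_1$, which yields $J(G)=|\bar G|$ without ever locating a binary polyhedral subgroup inside $G$.

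For task~(b)(ii) your trace idea is salvageable once detached from $2O$: for any $g\in G$ with $\pi(g)$ a $4$-cycle the eigenvalue ratio is $\pm i$, whence $\operatorname{tr}(g)^2=2\det(g)$ and $\zeta=\operatorname{tr}(g)^{-1}\in K$ gives $2\zeta^2=\det(g)^{-1}$, a root of unity. The paper reaches the same conclusion by lifting its explicit matrix $C$ (with $\det C=2$) to $\zeta C\in G$ and reading off $\det(\zeta C)=2\zeta^2$.
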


\begin{corollary}\label{3.10}
The Jordan constant of $\mathrm{SL}_2(K)$ can attain only the following values: 60, 24, 12, 2. More explicitly:

(i) $J(\mathrm{GL}_2(K))=60$ if and only if $-1$ is the sum of two squares in $K$ and~\mbox{$\sqrt{5}\in K$};

(ii) $J(\mathrm{GL}_2(K))=24$ if and only if $-1$ is the sum of two squares in~$K$,~\mbox{$\sqrt{5}\not\in K$} and either $\sqrt{2}\in K$ or $\sqrt{-2}\in K$;

(iii) $J(\mathrm{GL}_2(K))=12$ if and only if $-1$ is the sum of two squares in $K$, and none of $\sqrt{5}, \sqrt{2}$ and $\sqrt{-2}$ is in $K$;

(iv) $J(\mathrm{GL}_2(K))=2$ if and only if $-1$ is not the sum of two squares in $K$.
\end{corollary}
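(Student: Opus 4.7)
The plan is to derive Corollary~\ref{3.10} from Theorem~\ref{thm1.3} together with the classical classification of finite subgroups of $\mathrm{SL}_2(\bar K)$. Since $\mathrm{SL}_2(K)\subset\mathrm{GL}_2(K)$, one has $J(\mathrm{SL}_2(K))\leq J(\mathrm{GL}_2(K))$, so Theorem~\ref{thm1.3} restricts $J(\mathrm{SL}_2(K))$ to the candidate values $60, 24, 12, 2$. The task is then to decide, in each case, whether this bound is attained by some finite subgroup of $\mathrm{SL}_2(K)$ or whether it drops to the next admissible value.

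I would invoke the classification: every finite subgroup of $\mathrm{SL}_2(\bar K)$ is conjugate to a cyclic group, a binary dihedral group $\mathrm{Dic}_n$ (order $4n$, $n\geq 2$), the binary tetrahedral group $2T$, the binary octahedral group $2O$, or the binary icosahedral group $2I$, with Jordan indices (i.e.\ the index of the largest abelian normal subgroup) equal to $1, 2, 12, 24, 60$ respectively. Hence it suffices to characterize, in terms of $K$, exactly when each non-abelian group on this list embeds into $\mathrm{SL}_2(K)$.

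All three of $2T, 2O, 2I$ contain the quaternion group $Q_8$, and an explicit $2\times 2$ matrix construction shows $Q_8 \subset \mathrm{SL}_2(K)$ if and only if $-1$ is a sum of two squares in $K$. Building on this, I would argue that $2I \subset \mathrm{SL}_2(K)$ requires an additional element of order~$5$ with trace $(\pm\sqrt{5}-1)/2$, so $\sqrt{5}\in K$ is forced and is conversely sufficient, giving case~(i); that $2O \subset \mathrm{SL}_2(K)$ requires an order-$8$ element with trace $\pm\sqrt{2}$, leading after a finer analysis to the condition $\sqrt{2}\in K$ or $\sqrt{-2}\in K$ of case~(ii); and that $2T \subset \mathrm{SL}_2(K)$ reduces to the $Q_8$ condition, since an order-$3$ element with trace $-1$ is easy to construct, giving case~(iii). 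Case~(iv) then follows by exhibiting a binary dihedral subgroup when $-1$ is not a sum of two squares, yielding $J\geq 2$, combined with the upper bound from Theorem~\ref{thm1.3}.

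The main obstacle I anticipate is pinning down the precise embedding criterion for $2O$: the necessity of $\sqrt{2}\in K$ is immediate from the trace of an order-$8$ element in $\mathrm{SL}_2(K)$, but admitting $\sqrt{-2}\in K$ as an alternative, without $\sqrt{2}\in K$, is subtler and must be handled either by an explicit realization of $2O$ over such $K$ or by a Schur-index and quaternion-algebra argument analogous to the one used for $\mathrm{GL}_2(K)$ in the proof of Theorem~\ref{thm1.3}(ii).
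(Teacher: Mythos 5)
Your route is genuinely different from the paper's: the paper simply revisits the proof of Theorem~\ref{thm1.3}, notes that every lifting used there already lies in $\mathrm{SL}_2(K)$ except the lifting $\zeta C$ of the generator $C$ of $S_4$, and asks when $\det(\zeta C)=2\zeta^2$ can equal $1$; you instead classify the finite subgroups of $\mathrm{SL}_2(\bar K)$ and their Jordan constants and then seek embedding criteria over $K$. Your classification and the values $1,2,12,24,60$ are correct. The problem is the step you yourself defer as the ``main obstacle'': it cannot be closed, because your own necessity argument refutes claim (ii) as stated. The only isomorphism type of finite subgroup of $\mathrm{SL}_2(\bar K)$ with Jordan constant $24$ is $2O$ (any $G$ with $\pi(G)\cong S_4$ must contain $-I$, since $2O$ has no subgroup of order $24$ surjecting onto $S_4$), and $2O$ contains an element of order $8$, whose eigenvalues in $\mathrm{SL}_2$ are $\lambda,\lambda^{-1}$ with $\lambda$ a primitive $8$-th root of unity, so its trace is $\pm\sqrt{2}$. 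Hence $\sqrt{2}\in K$ is necessary and cannot be traded for $\sqrt{-2}$: for $K=\mathbb{Q}(\sqrt{-2})$ one has $-1=1^2+(\sqrt{-2})^2$, $\sqrt{5}\notin K$, $\sqrt{-2}\in K$, yet $\mathrm{SL}_2(K)$ has no element of order $8$, so $J(\mathrm{SL}_2(K))=12$ (it does contain $2T$, since $(-1,-1)_K$ splits), contradicting (ii). The paper's own one-line proof contains the matching slip: $1=2\zeta^2$ forces $\zeta=\pm 1/\sqrt{2}$, whereas $\zeta=1/\sqrt{-2}$ gives $\det(\zeta C)=-1$. The correct condition in (ii) is simply $\sqrt{2}\in K$, with $\sqrt{-2}$ deleted from (ii) and (iii).

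There is a second gap, in your case (iv): you cannot always ``exhibit a binary dihedral subgroup'' when $-1$ is not a sum of two squares. Every non-abelian finite subgroup of $\mathrm{SL}_2(K)$ is dicyclic or one of $2T,2O,2I$, each with a unique involution $-I$, and the realizability of $\mathrm{Dic}_n$ over $K$ is itself governed by a quaternion-algebra (Schur index) condition, not automatic. Over $K=\mathbb{R}$ every finite subgroup of $\mathrm{SL}_2(K)$ is conjugate into $\mathrm{SO}(2)$ and hence cyclic, and over $K=\mathbb{Q}$ the only candidates $Q_8$ and $\mathrm{Dic}_3$ both have Schur index $2$; so $J(\mathrm{SL}_2(\mathbb{R}))=J(\mathrm{SL}_2(\mathbb{Q}))=1$, not $2$. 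Thus $1$ must be added to the list of attainable values, and (iv) needs an honest criterion for the existence of some $\mathrm{Dic}_n$ over $K$ rather than the blanket condition that $-1$ is not a sum of two squares. Your necessity arguments for (i) and the $2T$ analysis for the value $12$ are sound and can be kept.
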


\begin{proposition}\label{thm1.2}
The Jordan constant of $\mathrm{PGL}_2(K)$ can attain only the following values: 60, 6, 2. More explicitly:

(i) $J(\mathrm{PGL}_2(K))=60$ if and only if $-1$ is the sum of two squares in $K$ and~\mbox{$\sqrt{5}\in K$};

(ii) $J(\mathrm{PGL}_2(K))=6$ if and only if $-1$ is the sum of two squares in $K$ and~\mbox{$\sqrt{5}\not\in K$};

(iii) $J(\mathrm{PGL}_2(K))=2$ if and only if $-1$ is not the sum of two squares in $K$.
\end{proposition}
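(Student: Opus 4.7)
The plan is to combine Klein's classification of finite subgroups of $\mathrm{PGL}_2(\overline K)$---which are isomorphic to $C_n$, $D_n$, $A_4$, $S_4$, or $A_5$---with a determination of which of these embed over $K$, and with the elementary computation of the minimal index $i(G)$ of an abelian normal subgroup in each case. These indices are $i(C_n) = 1$, $i(D_n) = 2$ for $n \geq 3$ (via the normal cyclic subgroup of index $2$), $i(A_4) = 3$ (via the Klein four $V_4$), $i(S_4) = 6$ (again via $V_4$, since $A_4$ is not abelian), and $i(A_5) = 60$ (since $A_5$ is simple and non-abelian). By definition $J(\mathrm{PGL}_2(K))$ is the supremum of $i(G)$ over all finite subgroups $G \subset \mathrm{PGL}_2(K)$. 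Since $S_3 = D_3$ always embeds via the permutation action on $\{0,1,\infty\} \subset \mathbb{P}^1$, we get $J(\mathrm{PGL}_2(K)) \geq 2$ unconditionally, and the remaining task reduces to deciding when $A_4$, $S_4$, and $A_5$ embed in $\mathrm{PGL}_2(K)$.

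The key embeddability claims are that $A_4 \subset \mathrm{PGL}_2(K) \Leftrightarrow S_4 \subset \mathrm{PGL}_2(K) \Leftrightarrow -1$ is a sum of two squares in $K$, and that $A_5 \subset \mathrm{PGL}_2(K)$ holds if and only if additionally $\sqrt 5 \in K$. For the sufficient direction of the first claim, when $-1 = a^2 + b^2$ in $K$, the quaternion algebra $(-1,-1)_K$ splits, yielding a $K$-algebra embedding into $M_2(K)$ with generators $i, j, k = ij$ satisfying $i^2 = j^2 = -1$ and $ij = -ji$. Then $V_4 = \{I, [i], [j], [k]\}$ lies in $\mathrm{PGL}_2(K)$; the projective class of $\omega = \tfrac{1}{2}(1+i+j+k)$ is an order-$3$ element normalizing $V_4$ by cycling $[i] \to [j] \to [k] \to [i]$, yielding $A_4$; and the projective class of $\tau = j - k$ (satisfying $\tau^2 = -2\,I$, a scalar, so $[\tau]$ has order $2$) fixes $[i]$ while swapping $[j]$ and $[k]$, enlarging $A_4$ to $S_4$. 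For the necessary direction, $A_4$ has a unique (up to twist) faithful $2$-dimensional projective representation, lifting only to the $2$-dimensional representation of $\widetilde{A}_4 = \mathrm{SL}_2(\mathbb{F}_3)$, whose Brauer obstruction to being defined over $K$ is precisely the class of $(-1,-1)_K$.

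For the $A_5$ equivalence, necessity is immediate: $A_5 \supset V_4$ gives the sum-of-squares condition as above, and $A_5 \supset C_5$ forces the $\mathrm{PGL}_2$-invariant $\mathrm{tr}^2/\det = \zeta_5 + \zeta_5^{-1} + 2 = \tfrac{3+\sqrt 5}{2}$ of an order-$5$ element to lie in $K$, hence $\sqrt 5 \in K$. Sufficiency is by explicit construction of the binary icosahedral group $\widetilde{A}_5 = \mathrm{SL}_2(\mathbb{F}_5)$ inside $\mathrm{SL}_2(K)$ using both the split quaternion generators and the golden ratio $\tfrac{1+\sqrt 5}{2}$, and then projecting to $\mathrm{PGL}_2(K)$.

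Assembling the pieces yields the proposition: if $-1$ is not a sum of two squares, only cyclic and dihedral subgroups embed and $J = 2$, proving (iii); if $-1$ is a sum of two squares but $\sqrt 5 \notin K$, the largest embedded finite subgroup is $S_4$ with $J = 6$, proving (ii); if both conditions hold, $A_5$ embeds with $J = 60$, proving (i). The main obstacle is the sufficiency half of the $S_4$-embeddability claim: although the analogous question for $\mathrm{SL}_2(K)$ in Corollary~\ref{3.10} requires an extra $\sqrt 2$ or $\sqrt{-2}$ to realize the binary octahedral group $\widetilde{S}_4$, in $\mathrm{PGL}_2(K)$ this obstruction disappears because we may rescale the lift $\tau$ of a transposition freely---the identity $\tau^2 = -2\,I$ is already scalar---which is precisely what rules out the intermediate Jordan constant $3$ that would otherwise arise from having $A_4$ but not $S_4$.
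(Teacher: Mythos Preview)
Your proof is correct and follows essentially the same architecture as the paper's: classify finite subgroups of $\mathrm{PGL}_2$ via Klein, compute the minimal index of an abelian normal subgroup in each, and determine which groups embed over $K$. The paper simply quotes Beauville's criterion (its Theorem~\ref{3.2}) as a black box for the embeddability of $A_4$, $S_4$, and $A_5$, whereas you supply independent justifications via the split quaternion algebra $(-1,-1)_K$ and the trace invariant $\mathrm{tr}^2/\det$. Your quaternionic construction of $S_4$ with $\tau=j-k$ satisfying $\tau^2=-2I$ is a clean way to see why the $\sqrt{\pm 2}$ obstruction present at the $\mathrm{SL}_2$ level disappears in $\mathrm{PGL}_2$, and it is equivalent to (though packaged differently from) the explicit matrices in the paper's Proposition~\ref{3.3}.

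One small expository point: in your $A_5$ necessity argument you write ``$A_5\supset V_4$ gives the sum-of-squares condition as above,'' but the containment $V_4\subset\mathrm{PGL}_2(K)$ alone holds for every $K$ (take $\mathrm{diag}(1,-1)$ and $\bigl(\begin{smallmatrix}0&1\\1&0\end{smallmatrix}\bigr)$). What you actually need---and what your ``as above'' really invokes---is $A_5\supset A_4$, so that the Brauer obstruction argument for $A_4$ applies. This is not a gap in the mathematics, only in the phrasing.
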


\begin{remark}
In some cases the criterion that $-1$ is the sum of two squares in $K$ holds automatically. For example, in claim (ii) of Theorem \ref{thm1.3}, if $2\zeta^2$ is a root of unity whose order $n$ is divisible by $4$, then $-1=x^2+x^2$, where~\mbox{$x=(2\zeta^2)^{\frac{n}{4}}$}. Moreover, in claim (ii) of Corollary \ref{3.10}, one has $-1=(\sqrt{-2})^2+1^2$ when~\mbox{$\sqrt{-2}\in K$}.
\end{remark}

The plan of the paper is as follows. We compute the Jordan constants of several finite groups in Section 2, based on which the Jordan constant of $\mathrm{GL}_2(K)$ is determined in Section 3. In Section 4, we classify all types of finite subgroups in $\mathrm{PGL}_3(K)$, and the criterion of the existence of each primitive subgroup is given in Section 5. Finally, the main result is proved in Section 6.

\noindent \textbf{Notation}: Throughout this essay, the cyclic group of order $r$ is denoted by~$C_r$; the dihedral group of order $2r$ is denoted by $D_r$; the finite field of order $q$ is denoted by $\mathbb{F}_q$; the algebraic closure of a field $K$ is denoted by $\bar{K}$; and the identity matrix is denoted by $I$.

\section{Preliminaries}
In this section, we compute the Jordan constants of several groups which will appear as subgroups of $\mathrm{PGL}_2(K)$ or $\mathrm{GL}_2(K)$.

\begin{lemma}\label{lem2.1}
(i) $J(S_n)=n!$, $J(A_n)=\frac{n!}{2}$, for $n\geq 5$.

(ii) $J(S_4)=6$, $J(A_4)=3$.

(iii) $J(D_n)=2$ for $n>2$.
\end{lemma}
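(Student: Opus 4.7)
The plan is to exploit the fact that for any finite group $\Gamma$, $J(\Gamma)$ equals the maximum, over subgroups $H \leq \Gamma$, of the minimal index $[H:A]$ taken as $A$ ranges over abelian normal subgroups of $H$. Since $[H:A] \leq |H| \leq |\Gamma|$ holds trivially, the required upper bounds reduce to exhibiting, in each subgroup, an abelian normal subgroup of sufficiently small index, while the lower bounds come from checking that the ambient group itself saturates the claimed value.

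For part (i), I would use that $A_n$ is simple and non-abelian for $n \geq 5$: its only normal subgroups are $\{e\}$ and $A_n$, so the minimal index of an abelian normal subgroup of $A_n$ is $|A_n| = n!/2$, matching the trivial upper bound. For $S_n$ with $n \geq 5$, the normal subgroups are exactly $\{e\}, A_n, S_n$ (a standard consequence of the simplicity of $A_n$), of which only $\{e\}$ is abelian, yielding $J(S_n) = n!$ by the same reasoning.

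For part (ii), I would point to the Klein four-group $V_4 = \{e,(12)(34),(13)(24),(14)(23)\}$, which is abelian, is the unique nontrivial proper normal subgroup of $A_4$, and is characteristic in $A_4$ and hence normal in $S_4$. This gives index $3$ in $A_4$ and $6$ in $S_4$, both optimal because $A_4$ and $S_4$ are non-abelian while $\{e\}$ is the only other abelian normal subgroup. To finish, I would verify that every proper subgroup of $S_4$ (namely $A_4$, copies of $D_4$, $S_3$, and abelian groups) admits an abelian normal subgroup of index at most $6$, which is immediate ($A_4$ contributes $3$ via $V_4$, and $D_4, S_3$ contribute $2$ via their index-$2$ cyclic subgroups).

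For part (iii), the rotation subgroup $C_n \trianglelefteq D_n$ is abelian of index $2$, and $D_n$ is non-abelian for $n \geq 3$, so $D_n$ itself contributes exactly $2$. Every proper subgroup of $D_n$ is either cyclic (hence abelian, contributing $1$) or a smaller dihedral group, and the same argument (or outright abelianness when the order is at most $4$) bounds its contribution by $2$. I foresee no substantial obstacle; the only minor point requiring care is confirming normality of $V_4$ in $S_4$, which follows from its description as the identity together with all elements of cycle type $(2,2)$ in $S_4$.
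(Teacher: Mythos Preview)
Your argument is correct and uses the same ingredients as the paper: simplicity of $A_n$ for (i), the Klein four-group $V_4$ for (ii), and the cyclic rotation subgroup $C_n$ for (iii). The one place you diverge is in explicitly checking that proper subgroups of $S_4$ and of $D_n$ contribute no more than the ambient group. This is unnecessary: if $A$ is an abelian normal subgroup of a finite group $G$ and $H\leq G$, then $A\cap H$ is an abelian normal subgroup of $H$ with $[H:A\cap H]=[HA:A]\leq [G:A]$, so the maximum in your formula for $J(G)$ is always attained at $H=G$. The paper relies on this tacitly and simply computes, for each group in the lemma, the minimal index of an abelian normal subgroup of the group itself; the remaining differences in detail (e.g.\ the paper rules out an index-$2$ abelian normal subgroup of $A_4$ by showing $A_4$ has no subgroup of order $6$ at all, and handles $S_4$ via conjugacy-class sizes, whereas you quote that $V_4$ is the unique nontrivial proper normal subgroup of $A_4$ and is characteristic there) are cosmetic.
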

\begin{proof}
For claim (i), it's well-known that $A_n$ is a simple group when $n\geq 5$, so~\mbox{$J(A_n)=|A_n|$}. Also, $A_n$ is a normal subgroup of $S_n$. If $S_n$ contains some other proper normal subgroup $N$, then $N\cap A_n$ is a normal subgroup of $A_n$, which implies that the intersection is either $\{1\}$ or $A_n$. The latter case implies~\mbox{$N=A_n$}. The previous case tells us $N\cap [S_n,S_n]=\{1\}$, hence $N$ is included in the center of  $S_n$, which is trivial. That is, $S_n$ has no non-trivial normal subgroups other than $A_n$, while $A_n$ is non-abelian.

For claim (ii), the subgroup $$N=\{(1), (12)(34), (13)(24), (14)(23)\}$$ is abelian and normal in $A_4$. Note that every group of order 6 is either $C_6$ or symmetric group $S_3$. Clearly $A_4$ does not contain $C_6$. Also $A_4$ does not contain~$S_3$, as $S_3$ has exactly 3 order-$2$ elements, none of which commutes with the others. So $J(A_4)=3$.

Now let's consider normal subgroups of $S_4$. First of all, $N$ is also abelian and normal in $S_4$. Furthermore, a subgroup is normal only if it's a union of conjugacy classes and contains $\{1\}$. Conjugacy classes of $S_n$ are determined by cycle type, so it's easy to observe that the conjugacy classes in $S_4$ have cardinality 1, 3, 6 or 8. Therefore $S_4$ cannot have normal subgroup of order 6 or 8, and the only normal subgroup of order 12 is $A_4$, which is non-abelian.

For the last claim, note that $C_n\mathrel{\unlhd} D_n$, and $D_n$ is non-abelian when $n>2$. 
\end{proof}

\begin{lemma}\label{lem2.2}
Let $G$ be a central extension of $D_n$ by a finite cyclic subgroup $C_r$, where $n\geq 3$, then $J(G)=2$.
\end{lemma}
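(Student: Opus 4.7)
The plan is to produce an abelian normal subgroup of $G$ of index $2$ and to observe that $G$ itself is non-abelian, so that $J(G)$ is exactly $2$.

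First, I would exploit the standard normal cyclic subgroup $C_n\trianglelefteq D_n$ of index $2$. Let $\pi\colon G\twoheadrightarrow D_n$ be the quotient map with central kernel $C_r$, and set $H:=\pi^{-1}(C_n)$. Then $H$ is normal of index $2$ in $G$, and it sits in a central extension
\[
1\longrightarrow C_r\longrightarrow H\longrightarrow C_n\longrightarrow 1.
\]

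Next, the key observation is that any central extension of a cyclic group by an abelian group is abelian. Indeed, pick any element $x\in H$ whose image generates $C_n$; then $H=\langle x, C_r\rangle$. Since $C_r$ is central in $G$ (hence in $H$) and $x$ commutes with itself, $H$ is generated by mutually commuting elements and is therefore abelian. This gives an abelian normal subgroup of $G$ of index at most $2$, so $J(G)\le 2$.

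For the matching lower bound, I note that $D_n$ is non-abelian for $n\ge 3$, and a quotient of an abelian group is abelian. Hence $G$ cannot be abelian, so $J(G)\ge 2$. Combining the two bounds yields $J(G)=2$. There is no real obstacle here: the only point one must be careful about is recognising that centrality of $C_r$ in $G$ (not merely in $H$) together with cyclicity of $C_n$ forces $H$ to be abelian.
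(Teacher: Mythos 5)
Your proposal is correct and follows essentially the same route as the paper: pull back the index-$2$ cyclic subgroup $C_n\trianglelefteq D_n$ to $H=\pi^{-1}(C_n)$, note it is abelian and normal of index $2$, and use non-abelianness of $G$ for the lower bound. Your justification that $H$ is abelian (generated by a single lift of a generator of $C_n$ together with the central $C_r$) is a detail the paper leaves implicit, but the argument is the same.
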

\begin{proof}
We have a short exact sequence 
$$1\rightarrow C_r\mathop{\rightarrow}^i G\mathop{\rightarrow}^{\pi} D_n\rightarrow 1.$$
Let $C_n\mathrel{\unlhd} D_n$ be the abelian normal subgroup of $D_n$, then $\pi^{-1}(C_n)$ is abelian and normal in $G$, with index 2. Note that $G$ is not abelian as $D_n$ is not abelian when $n\geq 3$, hence $J(G)=2$.
\end{proof}




\begin{lemma}\label{lem2.4}
Let $G$ be a nontrivial central extension of $A_5$ by a finite cyclic subgroup $C_r$, then $J(G)=60$.
\end{lemma}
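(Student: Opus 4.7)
The plan is to bound $J(G)$ both from above and from below by $60$, thereby pinning it down.

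For the upper bound, note that $G$ is finite of order $60r$, so every subgroup of $G$ is finite. Given an arbitrary subgroup $H\leq G$, set $H_0:=H\cap C_r$. Since $C_r$ lies in the center of $G$, the subgroup $H_0$ is central in $H$, hence abelian and normal in $H$. The second isomorphism theorem yields
$$[H:H_0] = [H\cdot C_r:C_r]\leq [G:C_r] = |A_5| = 60.$$
Thus every subgroup of $G$ admits an abelian normal subgroup of index at most $60$, so $J(G)\leq 60$.

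For the lower bound, I would take $G$ itself as the witness finite subgroup. Let $A\trianglelefteq G$ be any abelian normal subgroup. Then the image of $A$ in $G/C_r\cong A_5$ is an abelian normal subgroup of $A_5$, and since $A_5$ is simple and non-abelian this image must be trivial. Therefore $A\subseteq C_r$, which gives $[G:A]\geq[G:C_r]=60$, and the best one can do is $A=C_r$ itself, yielding $J(G)\geq 60$.

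Combining the two inequalities gives $J(G)=60$. I do not anticipate any serious obstacle here: the whole argument rests on the simplicity of $A_5$ together with the fact that the kernel of a central extension is automatically central in the total group. The hypothesis that the extension be nontrivial is not actually required for the equality $J(G)=60$ (the split case $G\cong C_r\times A_5$ is handled by the very same argument), but it is the relevant case in later sections, where $G$ arises as a nontrivial cover such as $\mathrm{SL}_2(\mathbb{F}_5)\cong 2.A_5$.
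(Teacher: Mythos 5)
Your proof is correct and follows essentially the same route as the paper: the simplicity (and non-abelianness) of $A_5$ forces every abelian normal subgroup to land inside the central $C_r$, which itself realizes index $60$. You are slightly more careful than the paper in checking the upper bound over all subgroups $H\leq G$ via $H\cap C_r$ (as Definition \ref{def1.1} formally requires), and your observation that nontriviality of the extension is not needed is also accurate.
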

\begin{proof}
Note that a maximal normal abelian subgroup of $G$ always contains~$C_r$, and it can be projected to a normal abelian subgroup of $A_5$, which is trivial. Therefore the maximal normal abelian subgroup of $G$ is exactly $ C_r$, and~\mbox{$J(G)=|A_5|=60$}.
\end{proof}

\section{Jordan constants of $\mathrm{PGL}_2(K)$ and $\mathrm{GL}_2(K)$}
We determine the Jordan constant of $\mathrm{GL}_2(K)$ in this section. The result is needed when we calculate the Jordan constants of finite intransitive subgroups of $\mathrm{PGL}_3(K)$. Proofs of Proposition \ref{thm1.2} and Corollary \ref{3.10} are also given in this section.






\begin{theorem}\label{3.1}
For an algebraically closed field $K$ of characteristic 0, a finite subgroup $G$ of $\mathrm{PGL}_2(K)$ must be of one of the following 5 types: $C_r$, $D_r$, alternating groups $A_4, A_5$ and symmetric group $S_4$.
\end{theorem}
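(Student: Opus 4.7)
The plan is to run the classical Klein-style orbit-counting argument on $\mathbb{P}^1_K$. Since $K$ is algebraically closed of characteristic zero, every finite-order element $g \in \mathrm{PGL}_2(K)$ lifts to a diagonalizable element of $\mathrm{GL}_2(K)$ and so fixes exactly two points on $\mathbb{P}^1_K$. Let $G \subset \mathrm{PGL}_2(K)$ be a nontrivial finite subgroup of order $N$, and let $X \subset \mathbb{P}^1_K$ be the set of points fixed by some nonidentity element of $G$. Counting pairs $(g,p)$ with $g \neq 1$ and $g(p) = p$ in two ways gives $2(N-1) = \sum_{i=1}^k (n_i - 1)\cdot N/n_i$, where $O_1, \dots, O_k$ are the $G$-orbits on $X$ and $n_1, \dots, n_k$ are the orders of the corresponding stabilizers. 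This yields the familiar identity $\sum_{i=1}^k (1 - 1/n_i) = 2 - 2/N$; since each summand lies in $[1/2, 1)$ while the right side lies in $[1, 2)$, only $k = 2$ or $k = 3$ is possible.

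In the $k = 2$ case the identity forces $n_1 = n_2 = N$, so both orbits are singletons and $G$ fixes two distinct points of $\mathbb{P}^1_K$. After a change of coordinates $G$ sits inside the diagonal torus of $\mathrm{PGL}_2(K)$, and is therefore cyclic. In the $k = 3$ case, sorting $n_1 \leq n_2 \leq n_3$, the only positive integer solutions are $(2,2,m)$ with $N = 2m$, and $(2,3,3), (2,3,4), (2,3,5)$ with $N = 12, 24, 60$. For $(2,2,m)$ the stabilizer of a point in the orbit of size $2$ is cyclic of order $m$ (a finite subgroup of $\mathrm{PGL}_2(K)$ fixing a point of $\mathbb{P}^1_K$ is cyclic, since in characteristic zero all of its elements are diagonalizable), it is normal of index $2$, and any element swapping the two points of that orbit completes $G$ to $D_m$; for the three exceptional solutions the natural faithful action on the smallest orbit (of size $N/n_3$) embeds $G$ into a symmetric group, and one identifies $G$ with $A_4, S_4, A_5$ respectively using the characterization of these groups by order together with the presence of elements of orders $2, n_2, n_3$.

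The main obstacle is this last identification step: the orbit data pins down $|G|$ and the multiset of stabilizer orders, but promoting this to an abstract isomorphism with the named groups requires a separate case analysis (for instance, $A_5$ is the unique non-abelian simple group of order $60$, while the $(2,3,4)$ case requires locating the four $G$-conjugates of a Sylow $3$-subgroup to produce the homomorphism to $S_4$). A streamlined alternative is to observe that any finite subgroup of $\mathrm{PGL}_2(K)$ is defined by finitely many matrix entries, hence lies in $\mathrm{PGL}_2(L)$ for some finitely generated subfield $L \subset K$; fixing an embedding $L \hookrightarrow \mathbb{C}$ reduces the statement to the classical classification of finite subgroups of $\mathrm{PGL}_2(\mathbb{C})$, equivalently of finite rotation groups of the $2$-sphere, which is well documented and spares one from rediscovering the Platonic solids in abstracto.
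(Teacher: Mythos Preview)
Your argument is correct and takes a genuinely different route from the paper. The paper simply cites the Miller--Blichfeldt--Dickson classification of finite subgroups of $\mathrm{SL}_2(K)$ and projects to $\mathrm{PGL}_2(K)$, treating the result as a black box. You instead work directly in $\mathrm{PGL}_2(K)$ via Klein's orbit-counting on $\mathbb{P}^1_K$: the key input is that in characteristic zero over an algebraically closed field every nontrivial finite-order element has exactly two fixed points, which drives the Riemann--Hurwitz-type identity $\sum_i(1-1/n_i)=2-2/N$ and the ensuing short list of stabilizer signatures. This is more self-contained and makes the geometry visible, at the cost of the final identification step, which you rightly flag as the only real work (the $(2,3,4)$ case in particular needs the Sylow $3$-conjugation action rather than the action on the smallest orbit). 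Your Lefschetz-principle alternative---specializing to a finitely generated subfield and embedding in $\mathbb{C}$---is a clean way to avoid that case analysis and is in spirit closer to what the paper does, since the cited reference ultimately works over $\mathbb{C}$ as well. One small point worth tightening: the claim that a finite subgroup of $\mathrm{PGL}_2(K)$ fixing a point is cyclic is true, but ``all elements are diagonalizable'' is not quite the reason; rather, the point stabilizer is the affine group $K\rtimes K^\times$, and in characteristic zero the additive part is torsion-free, so any finite subgroup injects into $K^\times$.
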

\begin{proof}
We refer to \cite[Chapter X, Section 101-103]{Mi-Bl-Di} for determination of all finite subgroups of $\mathrm{SL_2}(K)$, and then consider the canonical projection to $\mathrm{PGL}_2(K)$.
\end{proof}

\begin{theorem}\label{3.2} 
(i) $\mathrm{PGL}_2(K)$ contains $C_r$ and $D_r$ if and only if $K$ contains~\mbox{$\alpha+ \alpha^{-1}$} for some primitive $r$-th root of unity $\alpha$.

(ii) $\mathrm{PGL}_2(K)$ contains $A_4$ and $S_4$ if and only if $-1$ is the sum of two squares in $K$.

(iii) $\mathrm{PGL}_2(K)$ contains $A_5$ if and only if $-1$ is the sum of two squares and~\mbox{$\sqrt{5}\in K$}.
\end{theorem}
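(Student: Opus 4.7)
The plan is to prove the three bi-implications in turn, handling necessity and sufficiency separately in each.

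For part (i), my starting point is that any $M \in \mathrm{GL}_2(K)$ whose class in $\mathrm{PGL}_2(K)$ has order $r$ has eigenvalues $\alpha, \beta \in \bar K$ whose ratio $\gamma = \alpha/\beta$ is a primitive $r$-th root of unity. The identity
\[
\gamma + \gamma^{-1} \;=\; \frac{\alpha^2+\beta^2}{\alpha\beta} \;=\; \frac{(\mathrm{tr}\, M)^2}{\det M}-2
\]
then places $\gamma + \gamma^{-1}$ in $K$, giving necessity. Conversely, given $s := \gamma + \gamma^{-1} \in K$ with $r \ge 3$, I would use the companion matrix
\[
M \;=\; \begin{pmatrix} 0 & -(s+2) \\ 1 & s+2 \end{pmatrix},
\]
whose characteristic polynomial $x^2 - (s+2)x + (s+2)$ has roots $1+\gamma$ and $1+\gamma^{-1}$ in $\bar K$ with ratio exactly $\gamma$, so $[M]$ has order $r$ in $\mathrm{PGL}_2(K)$; the cases $r \leq 2$ are trivial. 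To pass from $C_r$ to $D_r$, I would appeal to the fact that the normalizer of any rank-one maximal torus in $\mathrm{PGL}_{2,K}$ contains a $K$-rational involution (the Galois swap of the two eigenlines), serving as the inverting element.

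For part (ii), the necessity of ``$-1$ is a sum of two squares in $K$'' is the central argument. Starting from $A_4 \hookrightarrow \mathrm{PGL}_2(K)$, I pick the Klein four subgroup $V_4 \subset A_4$ and a 3-cycle $c$, and lift these to $A, B, C \in \mathrm{GL}_2(K)$. One first shows $AB = -BA$: otherwise $A$ and $B$ commute, are simultaneously diagonalizable, and generate at most a $C_2$ in $\mathrm{PGL}_2(K)$. Writing $A^2 = aI$ and $B^2 = bI$ gives $(AB)^2 = -ab\,I$. Combining with the 3-cycle relations $CAC^{-1} = \lambda B$, $CBC^{-1} = \mu AB$, $C(AB)C^{-1} = \nu A$ and squaring produces $\lambda^2 b = a$, $\mu^2 a = -1$, $\nu^2 = -b$, so $a, b \in -K^{*2}$. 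The quaternion subalgebra $K\langle A, B\rangle \cong (a, b)_K \cong (-1, -1)_K$ therefore sits inside $M_2(K)$, i.e.\ $(-1, -1)_K$ is split, equivalently $-1 = x^2 + y^2$ for some $x, y \in K$.

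For sufficiency in (ii), given $-1 = x^2 + y^2$ I would realize $Q_8 \subset \mathrm{SL}_2(K)$ via the isomorphism $(-1, -1)_K \cong M_2(K)$ (using explicit matrices $\mathbf{i} = \bigl(\begin{smallmatrix} x & y \\ y & -x \end{smallmatrix}\bigr)$ and $\mathbf{j} = \bigl(\begin{smallmatrix} 0 & -1 \\ 1 & 0 \end{smallmatrix}\bigr)$ satisfying $\mathbf{i}^2 = \mathbf{j}^2 = -I$, $\mathbf{i}\mathbf{j} = -\mathbf{j}\mathbf{i}$), then enlarge to the binary tetrahedral group $2T \subset \mathrm{SL}_2(K)$ by adjoining the order-6 element $\omega = (-I + \mathbf{i} + \mathbf{j} + \mathbf{i}\mathbf{j})/2$ of trace $-1$ and determinant $1$; projecting to $\mathrm{PGL}_2(K)$ gives $A_4$. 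For $S_4$, I would further adjoin the class of the involution $s = \mathbf{j} + \mathbf{i}\mathbf{j} \in M_2(K)$, which squares to $-2\,I$ and implements the outer automorphism of $A_4$ on $V_4$.

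Part (iii) combines the previous two. Necessity: $A_4 \subset A_5$ forces $-1 = x^2 + y^2$ by (ii), and an order-5 element forces $\zeta_5 + \zeta_5^{-1} = (\sqrt 5 - 1)/2 \in K$ by (i), hence $\sqrt 5 \in K$. For sufficiency, the standard 2-dimensional representation of the binary icosahedral group $2I \cong \mathrm{SL}_2(\mathbb{F}_5)$ has generators with traces in $\{0, 1, (1+\sqrt 5)/2\} \subset K$ and assembles inside the split algebra $(-1, -1)_K \cong M_2(K)$; projecting to $\mathrm{PGL}_2(K)$ yields $A_5$.

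The main obstacle I expect is part (ii): the necessity direction requires careful tracking of scalar ambiguities between $\mathrm{GL}_2(K)$ and $\mathrm{PGL}_2(K)$ to pin down $a, b$ modulo squares and reduce to the splitting of $(-1, -1)_K$, while the sufficiency direction requires concrete matrix constructions of $2T$ (and the outer involution for $S_4$) and a verification that these project to the correct subgroups of $\mathrm{PGL}_2(K)$.
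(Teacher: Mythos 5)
The paper does not actually prove Theorem \ref{3.2}; it simply cites \cite[Proposition 1.1]{Beauville}, and what you have written is essentially a reconstruction of Beauville's own quaternion-algebra argument, so your route and the cited one coincide. Your outline is correct: the trace computation and companion matrix handle $C_r$; the anticommutation relation $AB=-BA$, the scalars $\lambda,\mu,\nu$ forcing $a,b\in -K^{\times 2}$, and the splitting of $(a,b)_K\cong(-1,-1)_K$ inside $M_2(K)$ give the necessity in (ii); and the explicit matrices $\mathbf i,\mathbf j$ together with the Hurwitz unit and the icosians give the sufficiency in (ii) and (iii). Three points deserve tightening. First, the passage from $C_r$ to $D_r$ is the only place where a one-line appeal (``the normalizer of a maximal torus contains a $K$-rational involution'') hides real content: the clean argument is that $M$ and $\operatorname{tr}(M)I-M=\det(M)M^{-1}$ share the same characteristic polynomial and are both regular, hence are conjugate by some $N\in\mathrm{GL}_2(K)$ by rational canonical form; then $N^2$ lies in $K[M]$ and is fixed by the induced involution of the quadratic \'etale algebra $K[M]$, hence is scalar, so $[N]$ is the required involution and $\langle[M],[N]\rangle\cong D_r$. (The general cohomological statement that $N(T)(K)\to W(K)$ is surjective is not automatic for nonsplit tori, so this concrete argument is the one to use.) Second, a small slip: $\omega=\tfrac12(-I+\mathbf i+\mathbf j+\mathbf{ij})$ has trace $-1$ and determinant $1$, hence satisfies $t^2+t+1=0$ and has order $3$, not $6$; this does not affect the construction of $2T$. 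Third, the sufficiency in (iii) is stated very tersely; to make it a proof you need to exhibit the $120$ unit icosians as norm-one elements of $(-1,-1)_{K}$ (using $\sqrt5\in K$) and observe that the split isomorphism $(-1,-1)_K\cong M_2(K)$, available exactly when $-1$ is a sum of two squares, sends them into $\mathrm{SL}_2(K)$ with image $A_5$ in $\mathrm{PGL}_2(K)$.
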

\begin{proof}
See \cite[Proposition 1.1]{Beauville}.
\end{proof}

Now we prove Proposition \ref{thm1.2}.
\begin{proof}
There are five types of subgroups $\mathrm{PGL}_2(K)$, listed in Theorem \ref{3.1}. Note that $A_4$ and $S_4$ appear in $\mathrm{PGL}_2(K)$ simultaneously, according to Theorem \ref{3.2}, and by Lemma \ref{lem2.1}, one has $J(S_4)>J(A_4)$. Hence we only consider $S_4$. 

From Lemma \ref{lem2.1}, we know that the largest possible Jordan constant is~$J(A_5)=60$, and the criterion for the existence of $A_5$ in $\mathrm{PGL}_2(K)$ is given in Theorem \ref{3.2}, thus (i) is proved. 

If $A_5$ does not exist in $\mathrm{PGL}_2(K)$, we will then consider $S_4$, thus (ii) can be proved again by Lemma \ref{lem2.1} and Theorem \ref{3.2}. 

If $S_4$ does not exist in $\mathrm{PGL}_2(K)$ either, then finite subgroups of $\mathrm{PGL}_2(K)$ are all of the form $C_r$ or $D_r$, whose Jordan constants are at most $2$. It's always true that $\omega^{-1}+\omega=-1\in K$, hence $D_3$ is a subgroup of $\mathrm{PGL}_2(K)$ by~\mbox{Theorem \ref{3.2}}. Therefore $J(\mathrm{PGL}_2(K))=J(D_3)=2$, and (iii) is proved.
\end{proof}

\begin{proposition}\label{3.3}
Let $a,b\in K$ such that $-1=a^2+b^2$, consider the following matrices in $\mathrm{PGL}_2(K)$:
$$A=\begin{pmatrix}
-a & b\\
b &a
\end{pmatrix},\ 
B=\frac{1}{2}\begin{pmatrix}
-1+a+b & -1+a-b\\
1+a-b & -1-a-b
\end{pmatrix},\ 
C=\begin{pmatrix}
-a+1 & b\\
b &a+1
\end{pmatrix}.$$
Then $A$ and $B$ generate $A_4$, while $B$ and $C$ generate $S_4$. Moreover, any subgroup of $\mathrm{PGL}_2(K)$ which is isomorphic to $A_4$ is conjugate to $\left\langle A,B\right\rangle$, and any subgroup of $\mathrm{PGL}_2(K)$ which is isomorphic to $S_4$ is conjugate to $\left\langle B,C\right\rangle$.
\end{proposition}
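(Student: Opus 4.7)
The plan is to verify the generation claims by direct matrix computation exploiting $a^2+b^2=-1$, and to handle the conjugacy claims via a Skolem--Noether argument applied to the quaternion algebra generated by the Klein four inside each $A_4$.

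For $\langle A,B\rangle\cong A_4$: one checks $A^2=-I$, so $A$ has order $2$ in $\mathrm{PGL}_2(K)$. Using $a^2+b^2=-1$, expansion yields $\det B=1$ and $\mathrm{tr}(B)=-1$, hence $B$ has characteristic polynomial $x^2+x+1$ and order $3$ in $\mathrm{PGL}_2(K)$. Multiplying out $AB$ and simplifying with the same relation gives $\mathrm{tr}(AB)=1$ and $\det(AB)=1$, so $(AB)^3=-I$ and $AB$ has order $3$ in $\mathrm{PGL}_2(K)$. Through the presentation $A_4=\langle s,t\mid s^2=t^3=(st)^3=1\rangle$, this yields a surjection $A_4\twoheadrightarrow\langle A,B\rangle$; since the only non-trivial proper quotient of $A_4$ is $C_3$ (which has no element of order $2$), the surjection is an isomorphism.

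For $\langle B,C\rangle\cong S_4$: compute $\mathrm{tr}(C)=2$ and $\det C=1-a^2-b^2=2$, so $C^2=2C-2I$, and a direct matrix computation gives the explicit identity $C^2=2A$. In particular $C^2$ is non-scalar while $C^4=4A^2=-4I$ is scalar, so $C$ has order exactly $4$ in $\mathrm{PGL}_2(K)$. A similar expansion yields $\mathrm{tr}(BC)=0$ and $\det(BC)=2$, so $(BC)^2=-2I$ and $BC$ has order $2$ in $\mathrm{PGL}_2(K)$. Using the presentation $S_4=\langle s,t\mid s^3=t^4=(st)^2=1\rangle$, one obtains a surjection $S_4\twoheadrightarrow\langle B,C\rangle$; the non-trivial proper quotients of $S_4$ are $S_3$ and $C_2$, neither of which contains an order-$4$ element, so the surjection is an isomorphism. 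As a by-product, $C^2=A$ in $\mathrm{PGL}_2(K)$, so $\langle A,B\rangle\subset\langle B,C\rangle$.

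For the conjugacy assertions, which I expect to be the main obstacle, let $G\subset\mathrm{PGL}_2(K)$ be isomorphic to $A_4$ with unique Klein four subgroup $V\triangleleft G$, and let $V_0\triangleleft\langle A,B\rangle$ be the Klein four of the reference $A_4$. Lifting the three involutions of $V$ to $\mathrm{GL}_2(K)$ produces, together with $I$, a $K$-basis of $M_2(K)$ realising it as a quaternion algebra; the same is true for $V_0$, whose associated presentation is the Hamilton algebra $\left(\tfrac{-1,-1}{K}\right)$, split precisely because $-1=a^2+b^2$. By Skolem--Noether, any two such quaternion-algebra embeddings into $M_2(K)$ are intertwined by conjugation by an element of $\mathrm{GL}_2(K)$, so after a $\mathrm{PGL}_2(K)$-conjugation we may assume $V=V_0$. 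The order-$3$ generator of $G$ then lies in $N_{\mathrm{PGL}_2(K)}(V_0)$, which coincides with $\langle B,C\rangle$ (the $\bar K$-normalizer of $V_0$ is $S_4$, already realised over $K$ as $\langle B,C\rangle$). Since all order-$3$ elements of $\langle B,C\rangle\cong S_4$ are conjugate inside it, a further conjugation brings $G$ onto $\langle A,B\rangle$. The $S_4$ case reduces to the above: given $S_4'\subset\mathrm{PGL}_2(K)$, its commutator subgroup $[S_4',S_4']\cong A_4$ can first be conjugated to $\langle A,B\rangle$, after which $S_4'$ is forced into $N_{\mathrm{PGL}_2(K)}(\langle A,B\rangle)=\langle B,C\rangle$, where equality holds by counting orders.
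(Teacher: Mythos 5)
Your verification that $\langle A,B\rangle\cong A_4$ and $\langle B,C\rangle\cong S_4$ is correct and is essentially the paper's argument: the paper also checks the relations of a presentation and then rules out proper quotients, only with different presentations (it uses the three-involution presentation of $S_4$ and verifies the relations on $CB$, $BC$, $C^2B^2C$, whereas you use the $(2,3,4)$ presentation and trace/determinant computations, which is cleaner and, via $C^2=2A$, also gives $\langle A,B\rangle\subset\langle B,C\rangle$ for free). For the conjugacy statements the paper simply cites Beauville's Theorem 4.2, so your attempt to prove them from scratch is a genuinely different route --- but it has a real gap.

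The gap is in the Skolem--Noether step. As written, your claim is that any two Klein four subgroups of $\mathrm{PGL}_2(K)$, each giving a quaternion-algebra basis of $M_2(K)$, are conjugate. That is false: an involution of $\mathrm{PGL}_2(K)$ has a well-defined invariant $\det(g)\in K^{\times}/(K^{\times})^2$ for any trace-zero lift $g$, and the multiset of invariants of the three involutions of $V$ is a conjugacy invariant. For instance, $\{I,\operatorname{diag}(1,-1),\left(\begin{smallmatrix}0&1\\1&0\end{smallmatrix}\right),\left(\begin{smallmatrix}0&1\\-1&0\end{smallmatrix}\right)\}$ has invariants $(-1,-1,1)$ and realises the presentation $\left(\tfrac{1,1}{K}\right)$, not $\left(\tfrac{-1,-1}{K}\right)$; Skolem--Noether only intertwines two homomorphisms out of the \emph{same} algebra with the \emph{same} distinguished generators, so it does not conjugate this $V$ onto $V_0$ (and indeed in general it is not conjugate to it). What rescues your argument --- and what is missing --- is the $A_4$-structure: the order-$3$ element of $G$ permutes the three involutions of $V$ cyclically, so all three have the same invariant $d$, and the relation $g_1g_2=\lambda g_3$ forces $d^2=d$, i.e.\ $d=1$ in $K^{\times}/(K^{\times})^2$. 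Only then can each lift be rescaled over $K$ to satisfy $g_i^2=-I$, identifying the presentation attached to $V$ with the Hamilton algebra $\left(\tfrac{-1,-1}{K}\right)$ and making Skolem--Noether applicable. With that paragraph inserted, the rest of your normalizer argument (reducing the order-$3$ generator into $N(V_0)=\langle B,C\rangle\cong S_4$, and the $S_4$ case via the commutator subgroup) goes through.
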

\begin{proof}
It's known that $$A_4\cong\left\langle x,y\mid x^3=y^2=(xy)^3=1\right\rangle,$$ according to \cite[p138, Beispiel 19.8]{Huppert}. Here $x=(123), y=(12)(34)$.

We take $x=B, y=A$, then all relations are satisfied. Therefore $A$ and $B$ will generate a group which is a quotient of $A_4$, which is either $A_4$ or~\mbox{$A_4/C_2\times C_2\cong C_3$}. The latter case is impossible, because $y=A$ is an order-2 element, which does not exist in $C_3$. As a result, $\langle A, B\rangle\cong A_4$.

Also we have $$S_4\cong\left\langle x,y,z\mid x^2=y^2=z^2=(xy)^3=(yz)^3=(zx)^2=1\right\rangle,$$ according to \cite[p138, Beispiel 19.7]{Huppert}. Here $x=(14), y=(24), z=(23)$.

We take $x=CB, y=BC$, and $z=C^2B^2C$, then all relations are satisfied. Therefore they will generate a group which is a quotient of $S_4$, which is one of $S_4$,~\mbox{$S_4/A_4\cong C_2$} or $S_4/C_2\times C_2\cong S_3$. Note that $C=zyx, B=yC^{-1}$ and~\mbox{$A=C^2$}, so the group contains $A$ and $B$, hence the subgroup generated by $A$ and $B$, which is $A_4$, thus our group must be $S_4$. Additionally, $\langle x,y,z\rangle=\langle B, C\rangle$, so $B$ and $C$ generate $S_4$.

Finally, from \cite[Theorem 4.2]{Beauville}, we know that $\mathrm{PGL}_2(K)$ contains only one conjugacy class of subgroups isomorphic to $A_4$, and one conjugacy class of subgroups isomorphic to $S_4$.
\end{proof}

Now we study finite subgroups of $\mathrm{GL}_2(K)$. Let $\pi:\mathrm{GL}_2(K)\rightarrow \mathrm{PGL}_2(K)$ be the canonical projection.

\begin{corollary}\label{3.5}
Let $G$ be a finite subgroup of $\mathrm{GL}_2(K)$, then $G$ is a central extension of $C_r$, $D_r$, $A_4$, $S_4$ or $A_5$ by a finite cyclic subgroup.
\end{corollary}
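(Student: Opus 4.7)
The plan is to exploit the canonical projection $\pi: \mathrm{GL}_2(K) \to \mathrm{PGL}_2(K)$, whose kernel is the center of $\mathrm{GL}_2(K)$, namely the group of scalar matrices. Given a finite subgroup $G \subseteq \mathrm{GL}_2(K)$, I would restrict $\pi$ to $G$ and produce the short exact sequence $1 \to N \to G \to \pi(G) \to 1$, where $N = G \cap \ker \pi$ consists precisely of those scalar matrices that belong to $G$. The aim is to show that $\pi(G)$ is one of the five listed groups and that $N$ is cyclic and central.

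For the quotient, observe that $G \subseteq \mathrm{GL}_2(K) \subseteq \mathrm{GL}_2(\bar K)$, so $\pi(G)$ embeds as a finite subgroup of $\mathrm{PGL}_2(\bar K)$. Applying Theorem \ref{3.1} to the algebraically closed field $\bar K$ forces $\pi(G)$ to be isomorphic to one of $C_r$, $D_r$, $A_4$, $S_4$, or $A_5$. For the kernel, identify the scalar matrices in $\mathrm{GL}_2(K)$ with $K^*$; then $N$ corresponds to a finite subgroup of $K^*$. Any finite subgroup of the multiplicative group of a field is cyclic, since its elements are roots of $x^{|N|} - 1$ and a polynomial of degree $m$ has at most $m$ roots in a field. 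Hence $N \cong C_s$ for some $s$. Finally, scalar matrices commute with every element of $\mathrm{GL}_2(K)$, so $N$ lies in the center of $G$, and the extension is indeed a central one.

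There is no real obstacle here: the corollary is essentially a direct combination of Theorem \ref{3.1} (pulled up to $\bar K$) with the two standard facts that $\ker \pi$ coincides with the center of $\mathrm{GL}_2(K)$ and that finite subgroups of $K^*$ are cyclic. The only subtlety worth flagging in the writeup is the transition from $K$ to $\bar K$, which is needed so that Theorem \ref{3.1} (stated only for algebraically closed fields) can be invoked to control $\pi(G)$.
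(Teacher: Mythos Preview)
Your proposal is correct and follows essentially the same approach as the paper: identify the scalar matrices in $G$ as a finite cyclic central subgroup, and apply Theorem~\ref{3.1} to the image $\pi(G)$. You simply spell out details the paper leaves implicit, including the passage to $\bar K$ needed to invoke Theorem~\ref{3.1} and the reason finite subgroups of $K^\times$ are cyclic.
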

\begin{proof}
Let $H\subset G$ be the subgroup of $G$ consisting of all scalar matrices, then it's finite and cyclic. After applying $\pi$ to $G$, the claim will be an obvious corollary of Theorem \ref{3.1}. 
\end{proof}

\begin{lemma}\label{3.6}
Let $n\geq 2$ be an integer, and $r$ is an integer coprime to $n$. If there is a cyclic subgroup of order $r$ in $\mathrm{PGL}_n(K)$, then there is a cyclic subgroup of the same order in $\mathrm{SL}_n(K)$.
\end{lemma}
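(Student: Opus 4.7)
The plan is to lift a generator of the cyclic group of order $r$ in $\mathrm{PGL}_n(K)$ to an element of $\mathrm{GL}_n(K)$ and then rescale it by a suitable scalar so that the rescaled matrix lands in $\mathrm{SL}_n(K)$ and has order exactly $r$. Concretely, let $g \in \mathrm{PGL}_n(K)$ generate a cyclic group of order $r$, and pick any preimage $h \in \mathrm{GL}_n(K)$ of $g$. Since $\pi(h^r) = g^r = 1$, we have $h^r = \lambda I$ for some $\lambda \in K^\ast$. Set $d = \det(h) \in K^\ast$; taking determinants of the identity $h^r = \lambda I$ yields the key relation
\[
d^r = \lambda^n.
\]

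Next I would search for a scalar $c \in K^\ast$ with two properties: $c^n = d^{-1}$ (so that $\det(ch) = 1$) and $c^r = \lambda^{-1}$ (so that $(ch)^r = c^r h^r = \lambda^{-1}\lambda I = I$). Because $\gcd(n,r) = 1$, Bézout gives integers $a,b$ with $an + br = 1$, and the candidate is
\[
c \coloneqq d^{-a}\lambda^{-b} \in K^\ast.
\]
A short computation using $d^r = \lambda^n$ verifies both required identities: $c^n = d^{-an}\lambda^{-bn} = d^{-an}d^{-br} = d^{-1}$ and, symmetrically, $c^r = \lambda^{-an}\lambda^{-br} = \lambda^{-1}$. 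Hence $ch \in \mathrm{SL}_n(K)$ and $(ch)^r = I$.

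Finally, to conclude that the order of $ch$ is exactly $r$ (and not a proper divisor of $r$), observe that $\pi(ch) = \pi(h) = g$, so the order of $ch$ in $\mathrm{GL}_n(K)$ is a multiple of the order $r$ of $g$ in $\mathrm{PGL}_n(K)$; combined with $(ch)^r = I$ this forces the order to be exactly $r$, producing the desired cyclic subgroup $\langle ch\rangle \subset \mathrm{SL}_n(K)$.

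The only point where anything could go wrong is ensuring $c \in K$ rather than merely in $\bar K$: solving $c^n = d^{-1}$ or $c^r = \lambda^{-1}$ separately would in general require adjoining roots, so the main (and only) substantive step is the Bézout trick, which works precisely because $\gcd(n,r) = 1$. Everything else is a routine verification.
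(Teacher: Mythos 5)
Your proposal is correct and follows essentially the same route as the paper: lift a generator to $h\in\mathrm{GL}_n(K)$ with $h^r=\lambda I$, use $\det(h)^r=\lambda^n$ and a B\'ezout relation $an+br=1$ to build the scalar $d^{-a}\lambda^{-b}$, and rescale into $\mathrm{SL}_n(K)$. Your extra observation that the rescaled element has order exactly $r$ (since it still projects to $g$) is a detail the paper leaves implicit.
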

\begin{proof}
We can choose integers $u, v$ such that $un+vr=1$ since~\mbox{$\gcd (r,n)=1$}. Assume $A$ is a matrix in $\mathrm{GL}_n(K)$ such that $A^r=aI, a\in K^{\times}$, then we have~\mbox{$\det(A)^r=a^n$}. Consider $$B= \frac{1}{\det(A)^ua^v}A,$$ then $B^r=I$.

\end{proof}

\begin{lemma}\label{5.5}
Let $n\geq 2$ be an integer, and $r$ is an integer coprime to $n$. Then for any finite subgroup $G\subset\mathrm{PGL}_n(K)$ which is generated by its elements of order $r$, there exists a finite subgroup $\Tilde{G}\subset \mathrm{SL}_n(K)$ such that $\pi(\Tilde{G})=G$. If~\mbox{$|\Tilde{G}|=n|G|$}, then $K$ contains a primitive $n$-th root of unity.
\end{lemma}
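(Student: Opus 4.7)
The plan is to lift each order-$r$ element of $G$ canonically to $\mathrm{SL}_n(K)$ by reusing the construction from the proof of Lemma~\ref{3.6}, and then to let $\tilde G$ be the subgroup of $\mathrm{SL}_n(K)$ generated by these lifts. Concretely, given $g\in G$ with $\operatorname{ord}(g)=r$, I choose any preimage $A\in\mathrm{GL}_n(K)$ of $g$, write $A^{r}=aI$ for some $a\in K^{\times}$, fix integers $u,v$ with $un+vr=1$, and set
\[
\tilde g \;:=\; \det(A)^{-u}\,a^{-v}\,A.
\]
The computation in Lemma~\ref{3.6} already gives $\tilde g\in\mathrm{SL}_n(K)$ and $\tilde g^{\,r}=I$. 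The first thing to verify is that $\tilde g$ depends only on $g$, not on the choice of representative $A$: replacing $A$ by $\lambda A$ multiplies $\det(A)$ by $\lambda^{n}$ and $a$ by $\lambda^{r}$, so the prefactor rescales by $\lambda^{-un-vr}=\lambda^{-1}$, exactly cancelling the new $\lambda$. Thus $g\mapsto\tilde g$ is a well-defined set-theoretic section over the order-$r$ elements of $G$.

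With this canonical lift in hand, I define $\tilde G\subset\mathrm{SL}_n(K)$ to be the subgroup generated by all such $\tilde g$. Since $G$ is by hypothesis generated by its order-$r$ elements, the equality $\pi(\tilde G)=G$ follows at once. The kernel $N$ of the surjection $\pi|_{\tilde G}\colon \tilde G\to G$ consists of scalar matrices in $\mathrm{SL}_n(K)$, hence is a subgroup of $\mu_n(K):=\{\lambda\in K:\lambda^{n}=1\}$, a finite group of order at most $n$. Consequently $\tilde G$ is finite and $|\tilde G|=|N|\cdot|G|\leq n|G|$.

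For the final assertion, suppose the upper bound is attained: $|\tilde G|=n|G|$, so $|N|=n$, meaning $\mu_n(K)$ already contains $n$ elements. Since $\mu_n(\bar K)$ is cyclic of order exactly $n$, this forces $\mu_n(K)=\mu_n(\bar K)$, so $K$ contains a primitive $n$-th root of unity. The only non-formal ingredient in the whole argument is the well-definedness of $\tilde g$, which is immediate from the relation $un+vr=1$; the remainder is bookkeeping for the central extension $1\to N\to\tilde G\to G\to 1$, with $N$ cyclic because it sits inside $K^{\times}$.
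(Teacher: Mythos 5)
Your proof is correct and follows essentially the same route as the paper: lift order-$r$ generators to $\mathrm{SL}_n(K)$ via the construction of Lemma~\ref{3.6} and let $\tilde{G}$ be the subgroup they generate. In fact you supply details the paper leaves implicit --- the finiteness of $\tilde{G}$ via the kernel $N\subseteq\{\lambda I:\lambda^n=1,\ \lambda\in K\}$, and the observation that $|N|=n$ forces $K$ to contain a primitive $n$-th root of unity --- so your write-up is, if anything, more complete than the original.
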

\begin{proof}
Pick a set of generators of $G$, consisting of elements of order $r$. According to Lemma \ref{3.6}, each element in the set can be lifted to $\mathrm{SL}_n(K)$. Then let~\mbox{$\Tilde{G}\subset\mathrm{SL}_n(K)$} be the subgroup generated by those liftings.
\end{proof}

\begin{proposition}\label{3.4}
Let $G$ be a finite subgroup of $\mathrm{GL}_2(K)$.

(i) If $\pi(G)\cong A_4$, then $J(G)=12$.  

(ii) If $\pi(G)\cong S_4$, then $J(G)=24$. 
\end{proposition}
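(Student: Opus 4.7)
The plan is to apply Corollary~\ref{3.5}, which presents $G$ as a central extension
\[
1 \longrightarrow C \longrightarrow G \overset{\pi}{\longrightarrow} \pi(G) \longrightarrow 1,
\]
where $C \leq G$ is the cyclic subgroup of scalar matrices. First I would observe that $C = Z(G)$: any central element of $G$ projects into the center of $\pi(G)$, which is trivial for both $A_4$ and $S_4$, so it must lie in $\ker\pi = C$. Since $C$ itself is a normal abelian subgroup of index $[G:C] = |\pi(G)|$, this already yields the upper bounds $J(G) \leq 12$ in case (i) and $J(G) \leq 24$ in case (ii).

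For the matching lower bound, I would take an arbitrary normal abelian subgroup $N \trianglelefteq G$ and show $[G:N] \geq |\pi(G)|$. Since $\pi$ is surjective, $\pi(N)$ is normal and abelian inside $\pi(G)$. A direct inspection of the normal-subgroup lattices of $A_4$ and $S_4$ shows that the only normal abelian subgroups are $\{1\}$ and the Klein four-group $V_4$: no proper subgroup of $V_4$ is normal in $A_4$, because $A_4/V_4 \cong C_3$ acts transitively on the three involutions of $V_4$, and the remaining candidates $A_4$ and $S_4$ are themselves non-abelian. So the only case I need to exclude is $\pi(N) = V_4$.

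The key step is the lemma that every finite abelian subgroup of $\mathrm{PGL}_2(\bar K)$ is cyclic. Indeed, a finite abelian subgroup of $\mathrm{GL}_2(\bar K)$ consists of pairwise commuting diagonalizable matrices (diagonalizable because they have finite order and $\mathrm{char}\,K = 0$), so it is simultaneously diagonalizable and can be conjugated into the diagonal torus $T \cong \bar K^\times \times \bar K^\times$; the image $\pi(T)$ in $\mathrm{PGL}_2(\bar K)$ is a one-dimensional torus, whose finite subgroups are cyclic. Applying this to the image of $N \subseteq \mathrm{GL}_2(K) \subseteq \mathrm{GL}_2(\bar K)$ in $\mathrm{PGL}_2(\bar K)$ forces $\pi(N)$ to be cyclic, which rules out $V_4$. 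Therefore $\pi(N) = \{1\}$, i.e., $N \subseteq C$, giving $[G:N] \geq [G:C] = |\pi(G)|$.

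Combining the two bounds yields $J(G) = 12$ in case (i) and $J(G) = 24$ in case (ii). The only nontrivial point is the cyclicity lemma for finite abelian subgroups of $\mathrm{PGL}_2$, which is essentially where the rank-two hypothesis is used; in characteristic zero it is a standard consequence of simultaneous diagonalization, and all remaining work reduces to bookkeeping with the normal-subgroup lattices of $A_4$ and $S_4$.
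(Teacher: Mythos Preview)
Your argument is correct and takes a genuinely different route from the paper. The paper works with the explicit generators $A$, $B$, $C$ of Proposition~\ref{3.3}: it lifts the two generators $g_1 = A$ and $g_2 = BAB^{-1}$ of the Klein four-group $N \cong C_2 \times C_2$ to elements $h_1, h_2 \in G$ chosen so that $h_2 = h' h_1 h'^{-1}$ for some lifting $h'$ of $B$, and checks by direct matrix computation that $h_1$ and $h_2$ do not commute in $\mathrm{GL}_2(K)$; this forces any abelian normal subgroup containing the scalars to project trivially. Your approach bypasses the explicit matrices entirely: since $N$ is already abelian inside $\mathrm{GL}_2(\bar K)$, it is simultaneously diagonalizable, so $\pi(N)$ sits in a one-dimensional torus and is therefore cyclic, immediately excluding $V_4$. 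This is cleaner and also avoids the conjugacy reduction that the paper needs at the end of its proof.

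One point of care: your lemma as \emph{stated} --- ``every finite abelian subgroup of $\mathrm{PGL}_2(\bar K)$ is cyclic'' --- is false. The Klein four-group itself embeds in $\mathrm{PGL}_2(\bar K)$, for instance via the images of $\mathrm{diag}(1,-1)$ and $\left(\begin{smallmatrix}0&1\\1&0\end{smallmatrix}\right)$, which commute modulo scalars but not in $\mathrm{GL}_2$. What you actually prove, and what your application needs, is the correct statement: \emph{the image in $\mathrm{PGL}_2(\bar K)$ of a finite abelian subgroup of $\mathrm{GL}_2(\bar K)$ is cyclic}. Since your $N$ is abelian already in $\mathrm{GL}_2(K)$ by hypothesis, this version applies and the proof goes through; just rephrase the lemma accordingly.
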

\begin{proof}
Let's first assume that $\pi(G)=\left\langle A,B\right\rangle$ or $\left\langle B, C\right\rangle$, where $A, B$ and $C$ are given in Proposition \ref{3.3}. Let $N\cong C_2\times C_2$ be the unique nontrivial abelian normal subgroup of both $A_4$ and $S_4$. Then $g_1=A$, and $$g_2=\begin{pmatrix}
0 & 1\\
-1 &0
\end{pmatrix}=BAB^{-1}$$
give a pair of generators of $N$.

For any lifting $h_1\in G$ of $g_1$, and  $h'\in G$ of $B$, $h_2=h'h_1h'^{-1}$ will be a lifting of $g_2$, and it does not commute with $h_1$.

From Corollary \ref{3.5} we know that $G$ is a central extension of $A_4$ or $S_4$ by a finite cyclic subgroup $C_r$, which consists of all the scalar matrices in $G$. Note that a maximal abelian normal subgroup of $G$ must contain $C_r$. Let $H\subset G$ be such a subgroup. Then $\pi(H)$ is an abelian normal subgroup in $A_4$ or $S_4$, so $\pi(H)=\{1\}$ or $N$. If $\pi(H)=N$, then all liftings of $g_1$ and $g_2$ lie in $H$. In particular, we have $h_1\in H$ and $h_2\in H$. However, these two elements do not commute. The contradiction implies that $\pi(H)=\{1\}$. Hence $H=C_r$, and $$J(G)=\frac{|G|}{r}=|\pi(G)|.$$

Now we consider any $G$ such that $\pi(G)\cong A_4$ or $S_4$. By Proposition \ref{3.3} we know that $\pi(G)$ is conjugate to $\left\langle A,B\right\rangle$ or $\left\langle B, C\right\rangle$. Then our $G$ is conjugate to the subgroup we discussed in above paragraphs, by the same conjugation. Note that conjugation does not change Jordan constant.
\end{proof}

\begin{proposition}\label{3.7}
We have a finite subgroup $G\subset \mathrm{GL}_2(K)$ such that $\pi(G)\cong A_4$ if and only if $-1$ is the sum of two squares in $K$. If this is the case, then~\mbox{$J(G)= 12$}.
\end{proposition}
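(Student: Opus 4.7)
The plan is to bootstrap off the existence criterion for $A_4\subset\mathrm{PGL}_2(K)$ given in Theorem \ref{3.2} together with the Jordan-constant computation of Proposition \ref{3.4}. The second conclusion $J(G)=12$ is an immediate consequence of Proposition \ref{3.4}(i) once the existence of such a $G$ is established, so the only real work is the biconditional.

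For the forward direction, I would simply observe that if a finite $G\subset\mathrm{GL}_2(K)$ satisfies $\pi(G)\cong A_4$, then $\mathrm{PGL}_2(K)$ contains a copy of $A_4$, and Theorem \ref{3.2}(ii) then yields directly that $-1$ is a sum of two squares in $K$.

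For the backward direction, suppose $-1=a^2+b^2$ for some $a,b\in K$. I would use the explicit matrices $A$ and $B$ from Proposition \ref{3.3} and set $G=\langle A,B\rangle\subset\mathrm{GL}_2(K)$. A short determinant computation using the relation $a^2+b^2=-1$ shows that $A,B\in\mathrm{SL}_2(K)$; Proposition \ref{3.3} then gives $\pi(G)=A_4$. Since $G\subset\mathrm{SL}_2(K)$, the kernel of $\pi|_G$ consists of scalar matrices of determinant $1$, hence is contained in $\{\pm I\}$. In particular $|G|$ divides $24$, so $G$ is finite with $\pi(G)\cong A_4$ as required, and $J(G)=12$ follows from Proposition \ref{3.4}(i).

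The main technical point is to ensure that the chosen lift of $A_4$ lives inside $\mathrm{SL}_2(K)$, so that finiteness of $G$ follows automatically from finiteness of its projective image; the determinant verification for the explicit matrices of Proposition \ref{3.3} is the most concrete step but is purely computational. An alternative and equally routine route would be to invoke Lemma \ref{5.5}: since $A_4$ is generated by its order-$3$ elements and $\gcd(2,3)=1$, one obtains a finite lift to $\mathrm{SL}_2(K)$ without writing any matrices down explicitly.
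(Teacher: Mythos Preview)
Your proposal is correct and essentially matches the paper. The forward direction and the $J(G)=12$ conclusion are identical to the paper's argument. For the converse, the paper's proof takes exactly the route you describe as your ``alternative'': it notes that $A_4$ is generated by its order-$3$ elements and invokes Lemma~\ref{5.5} to lift to a finite subgroup of $\mathrm{SL}_2(K)$. Your primary route---checking directly that the explicit matrices $A,B$ of Proposition~\ref{3.3} lie in $\mathrm{SL}_2(K)$ (indeed $\det A=-a^2-b^2=1$ and a short computation gives $\det B=1$)---is a perfectly valid and slightly more concrete variant of the same idea, and has the minor advantage of exhibiting the lift explicitly rather than abstractly.
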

\begin{proof}
If $\pi(G)\cong A_4$, then $\mathrm{PGL}_2(K)$ contains $A_4$. Then $-1$ is the sum of two squares in $K$, by claim (ii) in Theorem \ref{3.2}.

Conversely, if $-1$ is the sum of two squares in $K$, then $\mathrm{PGL}_2(K)$ contains~$A_4$, again according to Theorem \ref{3.2}. Note that $A_4$ is generated by its order-3 elements, so we can lift all of them to $\mathrm{SL}_2(K)$ by Lemma \ref{5.5}. Let $G$ be the group generated by those lifted elements. 

Finally, we obtain $J(G)=12$ according to Proposition \ref{3.4}. 
\end{proof}

\begin{proposition}\label{3.8}
We have a finite subgroup $G\subset \mathrm{GL}_2(K)$ such that $\pi(G)\cong S_4$ if and only if $-1$ is the sum of two squares in $K$,and there exists $\zeta\in K$ such that $2\zeta^2\in K$ is a root of unity. If this is the case, then $J(G)=24$.
\end{proposition}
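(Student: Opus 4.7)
The plan is to prove both directions by analyzing liftings of the order-$4$ generator $C$ of Proposition~\ref{3.3}. First I would record two elementary computations that drive everything: using $a^2+b^2=-1$, one checks $\det(C)=\operatorname{tr}(C)=2$, so Cayley--Hamilton gives $C^2-2C+2I=0$ and hence $C^4=-4I$; similarly $\det(B)=1$ and $B^3=I$, so $B\in\mathrm{SL}_2(K)$ already.

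For necessity, assume $G\subset\mathrm{GL}_2(K)$ is finite with $\pi(G)\cong S_4$. Theorem~\ref{3.2}(ii) applied to $\pi(G)\subset\mathrm{PGL}_2(K)$ immediately gives that $-1$ is a sum of two squares in $K$. By Proposition~\ref{3.3}, after a conjugation in $\mathrm{GL}_2(K)$ that preserves both finiteness and the isomorphism type of $\pi(G)$, I may assume $\pi(G)=\langle B,C\rangle$. Any lifting $\tilde{C}\in G$ of $C$ has the form $\tilde{C}=\alpha C$ for some $\alpha\in K^{\times}$, so $\tilde{C}^4=-4\alpha^4\,I$. Finiteness of $G$ forces $\tilde{C}^4$ to be a scalar of finite order, so $-4\alpha^4$ is a root of unity in $K$. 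Setting $\zeta:=\alpha$, the identity $(2\zeta^2)^2=-(-4\zeta^4)$ shows $(2\zeta^2)^2$ is a root of unity, and hence so is $2\zeta^2$.

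For sufficiency, assume both conditions. Theorem~\ref{3.2}(ii) supplies $\langle B,C\rangle\cong S_4$ inside $\mathrm{PGL}_2(K)$; I set $\tilde{C}:=\zeta C$ and $G:=\langle B,\tilde{C}\rangle\subset\mathrm{GL}_2(K)$, so that $\pi(G)=\langle B,C\rangle\cong S_4$ and $\tilde{C}^4=-(2\zeta^2)^2\,I$ is scalar of finite order. The main obstacle is then to show that $G$ is actually finite, since finite-order generators need not generate a finite group in general. My plan is to use the determinant. Its image satisfies $\det(G)\subseteq\langle\det(B),\det(\tilde{C})\rangle=\langle 2\zeta^2\rangle$, a finite cyclic subgroup of $K^{\times}$ consisting of roots of unity. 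Writing $Z$ for the center of $\mathrm{GL}_2(K)$, any scalar matrix $\lambda I\in G\cap Z$ then satisfies $\lambda^2=\det(\lambda I)\in\langle 2\zeta^2\rangle$, forcing $\lambda$ itself to be a root of unity in $K$. Since $G\cap Z=\ker(\pi|_G)$ has index $24$ in the finitely generated group $G$, it is itself finitely generated, and a finitely generated torsion subgroup of $K^{\times}$ is finite; therefore $G$ is finite. Finally, $J(G)=24$ follows from Proposition~\ref{3.4}(ii).
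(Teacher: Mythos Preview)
Your proof is correct and follows essentially the same strategy as the paper: reduce via Proposition~\ref{3.3} to the explicit pair $\langle B,C\rangle$, observe that a lift of $C$ has the form $\zeta C$ with $\det(\zeta C)=2\zeta^2$, and use this both to extract the necessary condition and to construct the finite lift. The only differences are in execution: for necessity the paper reads off $\det(\zeta C)=2\zeta^2$ directly rather than passing through $C^4=-4I$, and for finiteness the paper embeds $G$ into the explicit finite group $\pi^{-1}(S_4)\cap\{h:\det(h)^{24l}=1\}$ (where $2\zeta^2$ has order $l$) instead of invoking Schreier's lemma together with the fact that finitely generated abelian torsion groups are finite.
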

\begin{proof}
Suppose such $G$ exists, then $\mathrm{PGL}_2(K)$ contains $S_4$, therefore $-1$ is the sum of two squares in $K$, by claim (ii) in Theorem \ref{3.2}. Moreover, from Proposition \ref{3.3}, we know that $\mathrm{PGL}_2(K)$ contains only one conjugacy class of subgroups isomorphic to $S_4$. So we may assume it is generated by the matrices $B$ and~$C$ given in Proposition \ref{3.3}, without loss of generality. In particular, there is a matrix $g\in G$ such that $\pi(g)=C$. Assume $g=\zeta C$, then $\det(g)=2\zeta^2$ is a root of unity, as $|G|<\infty$.

Conversely, if $-1$ is the sum of two squares in $K$, we can firstly construct~$S_4$ in~$\mathrm{PGL}_3(K)$ via Proposition \ref{3.3}. Note that $B$ itself is an order-3 matrix in~$\mathrm{GL}_2(K)$. If $2\zeta^2$ is an $l$-th primitive root of unity in $K$, let $g=\zeta C$, and $G$ be the subgroup generated by $B$ and $g$. We observe that $\pi(G)\cong S_4$, and~\mbox{$g^{8l}=I$}. Let $$\mathrm{SL}^{24l}(K)= \{h\in\mathrm{GL}_2(K)\mid \ \det(h)^{24l}=1\}.$$
Let $\pi_{24l}$ be the restriction of $\pi$ to $\mathrm{SL}^{24l}(K)$, then $G\subset \pi_{24l}^{-1}(S_4)$, therefore $G$ is finite.

Finally, we obtain $J(G)=24$ according to Proposition \ref{3.4}.
\end{proof}

\begin{proposition}\label{3.9}
We have a finite subgroup $G\subset \mathrm{GL}_2(K)$ such that~\mbox{$\pi(G)\cong A_5$} if and only if $-1$ is the sum of two squares in $K$ and $\sqrt{5}\in K$. If this is the case, then $J(G)=60$.
\end{proposition}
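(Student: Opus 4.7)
The plan is to mirror the arguments of Propositions \ref{3.7} and \ref{3.8}, with the added bonus that using order-$3$ generators for $A_5$ lets us bypass the delicate $\zeta$-condition that was forced on us in the $S_4$ case. For the ``only if'' direction, if some finite $G\subset\mathrm{GL}_2(K)$ has $\pi(G)\cong A_5$, then $\mathrm{PGL}_2(K)$ already contains a copy of $A_5$, and claim (iii) of Theorem \ref{3.2} immediately yields both required conditions on $K$.

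For the ``if'' direction, assume $-1=a^2+b^2$ in $K$ and $\sqrt{5}\in K$. Theorem \ref{3.2}(iii) furnishes an embedding $A_5\hookrightarrow\mathrm{PGL}_2(K)$. Since $A_5$ is generated by its twenty $3$-cycles and $\gcd(3,2)=1$, Lemma \ref{5.5} produces a finite subgroup $\tilde G\subset\mathrm{SL}_2(K)$ with $\pi(\tilde G)=A_5$, and we take $G=\tilde G$.

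To pin down the Jordan constant, let $G\subset\mathrm{GL}_2(K)$ be any finite subgroup with $\pi(G)\cong A_5$. Corollary \ref{3.5} presents $G$ as a central extension of $A_5$ by the cyclic subgroup $C_r$ of scalar matrices in $G$. If $r\geq 2$, Lemma \ref{lem2.4} gives $J(G)=60$ directly; if $r=1$, then $G\cong A_5$ and $J(A_5)=60$ by Lemma \ref{lem2.1}(i). Either way $J(G)=60$.

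The main obstacle is essentially absent at this stage: the heavy lifting for constructing embeddings of $A_5$ into $\mathrm{PGL}_2(K)$ was already carried out in Theorem \ref{3.2}, and the structure theory for central extensions of $A_5$ was prepared in Lemma \ref{lem2.4}. The only subtle point is to pick order-$3$ (rather than order-$2$) generators of $A_5$ when invoking Lemma \ref{5.5}, so that the hypothesis $\gcd(r,n)=1$ is satisfied; this is precisely the reason no root-of-unity condition on $K$ analogous to the one in Proposition \ref{3.8} appears in the statement.
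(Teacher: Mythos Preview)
Your proof is correct and follows essentially the same route as the paper: Theorem \ref{3.2}(iii) for both directions of the biconditional, Lemma \ref{5.5} with order-$3$ generators for the lift, and Lemma \ref{lem2.4} for the Jordan constant. Your extra case split on $r=1$ versus $r\geq 2$ is harmless but unnecessary, since the proof of Lemma \ref{lem2.4} works verbatim when $C_r$ is trivial.
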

\begin{proof}
If $\pi(G)\cong A_5$, then $\mathrm{PGL}_2(K)$ contains $A_5$. Then $-1$ is the sum of two squares in $K$, and $\sqrt{5}\in K$, by claim (iii) in Theorem \ref{3.2}. 

Conversely, if $-1$ is the sum of two squares in $K$ and $\sqrt{5}\in K$, then $\mathrm{PGL}_2(K)$ contains $A_5$, again according to Theorem \ref{3.2}. Note that $A_5$ is generated by its order-3 elements, so we can lift all of them to $\mathrm{SL}_2(K)$ by Lemma \ref{5.5}. Let $G$ be the group generated by those lifted elements. 

Finally, we obtain $J(G)=60$ according to Lemma \ref{lem2.4}. 
\end{proof}

Now we are ready to prove Theorem \ref{thm1.3}.
\begin{proof}
If a finite subgroup $G\subset \mathrm{GL}_2(K)$ is a central extension of $C_r$ by a finite cyclic subgroup, then $G$ is still abelian, hence $J(G)=1$.

If $G$ is a central extension of $D_r$ by a finite cyclic subgroup, where $r>2$, then $J(G)=2$, by Lemma \ref{lem2.2}. Moreover, such subgroup always exists: $D_3\cong S_3$ has an obvious 2-dimensional irreducible faithful representation.

Let $G$ be a finite subgroup of $\mathrm{GL}_2(K)$, with $\pi(G)=A_4, S_4$ or $A_5$, then~$J(G)$ equals to $12, 24$ or $60$, respectively. We start from the case which give the largest possible Jordan constant. Then claim (i) is obtained by Proposition \ref{3.9}. If conditions in claim (i) are not satisfied, we may search for $\pi(G)=S_4$, then claim (ii) comes from Proposition \ref{3.8}. Furthermore, if conditions in both claim (i) and (ii) do not hold, we now search for $G$ such that $\pi(G)=A_4$, which is claim (iii), and it can be proved by referring to Proposition \ref{3.7}. Finally, if $\pi(G)$ cannot be any one of $A_4, S_4$ and $A_5$, then it should be $D_r$ or $C_r$. We already show that there always exists $G$ such that $\pi(G)=D_3$, with $J(G)=2$, so we come to claim (iv).
\end{proof}

Finally we provide a short proof of Corollary \ref{3.10}.
\begin{proof}
Note that in the proof of Theorem \ref{thm1.3} all the lifting matrices lie in~$\mathrm{SL}_2(K)$, except the lifting of the matrix $C$, which is one of the generators of~$S_4$, defined in Proposition \ref{3.3}. Assume that we have a lifting $\zeta C$ in $\mathrm{SL}_2(K)$, then $\zeta\in K$, and $1=\det (\zeta C)=2\zeta^2$. So $\zeta$ should be $\frac{1}{\sqrt{2}}$ or $\frac{1}{\sqrt{-2}}$, which is equivalent to say either $\sqrt{2}\in K$ or $\sqrt{-2}\in K$.
\end{proof}



\section{Finite subgroups in $\mathrm{PGL}_3(K)$}

In this section, we recall the concept of primitive subgroups, referring to \cite[Chapter 1, Definition 1.1]{Yau-Yu} and \cite[Chapter XI, Section 106]{Mi-Bl-Di}. We also list all types of finite subgroups in $\mathrm{PGL}_3(K)$.

\begin{definition}\label{def4.1}
We regard elements in $\mathrm{GL}_n(K)$ as linear automorphisms of a vector space  $V=K^n$. Consider a subgroup $G$ of $\mathrm{GL}_n(K)$.

1. We say $G$ is intransitive if we can decompose the vector space $V$ into a direct sum of more than one subspaces $V=\bigoplus\limits_{i}V_i$, such that $g(V_i)=V_i$, for all $i$ and all $g\in G$. If such a decomposition does not exist, then we say $G$ is transitive.

2. We say $G$ is imprimitive, if it's intransitive, and we can decompose the the vector space $V$ into a direct sum of more than one subspaces $V=\bigoplus\limits_{i}V_i$, such that for each $i$ and each $g\in G$, $g(V_i)\subset V_j$ for some $j$. 

3. We say $G$ is primitive, if it's neither transitive, nor imprimitive.
\end{definition}

\begin{theorem}\label{4.2}
For an algebraically closed field $K$ of characteristic 0, a finite subgroup $G$ of $\mathrm{SL}_3(K)$ is conjugate to a group of one of the following 12 types:

(A) diagonal group;

(B) group of the form $$\left\{\begin{pmatrix}
g & 0\\
0 & \det(g)^{-1} 
\end{pmatrix}\mid g\in \Tilde{G}\right\},$$ where $\Tilde{G}\cong G$ is a finite subgroup of $\mathrm{GL}_2(K)$;

(C) group generated by group of type (A) and $T$;

(D) group generated by group of type (C) and $R_{a,b,c}$ for some $a,b$ and $c$ in~$K$;

(E) group of order 108 generated by S, T and V;

(F) group of order 216 generated by (E) and $UVU^{-1}$;

(G) group of order 648 generated by (E) and $U$;

(H) simple group of order 60 isomorphic to the alternating group $A_5$;

(I) simple group of order 168 isomorphic to $\mathrm{PSL}_2(\mathbb{F}_7)$;

(J) group of order 180 generated by (H) and F;

(K) group of order 504 generated by (I) and F;

(L) group of order 1080 with quotient $G/F$ isomorphic to the alternating group $A_6$.

Here 
\begin{align*}
T=\begin{pmatrix}
0 & 1 & 0\\
0 & 0 & 1 \\
1 & 0 & 0
\end{pmatrix},\ 
S&=\begin{pmatrix}
1 & 0 & 0\\
0 & \omega & 0 \\
0 & 0 & \omega^2
\end{pmatrix}, \\
V=\frac{1}{\sqrt{-3}}\begin{pmatrix}
1 & 1 & 1\\
1 & \omega & \omega^2\\
1 & \omega^2 & \omega
\end{pmatrix},\ 
U&=\begin{pmatrix}
\epsilon & 0 & 0\\
0 & \epsilon & 0\\
0 & 0 & \epsilon\omega
\end{pmatrix},\ 
R_{a,b,c}=\begin{pmatrix}
a & 0 & 0\\
0 & 0 & b \\
0 & c & 0
\end{pmatrix}
\end{align*} with $abc=-1$.
Furthermore, $F=\{I,\omega I,\omega^2 I\}$ is the center of $\mathrm{SL}_3(K)$, and~\mbox{$\epsilon^3=\omega^2$}. 
\end{theorem}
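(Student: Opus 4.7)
The plan is to reduce the problem to the classical setting over $\mathbb{C}$ and then split into the three cases suggested by Definition \ref{def4.1}. Any finite subgroup $G\subset\mathrm{SL}_3(K)$ is generated by finitely many matrices, whose entries generate a subfield $K_0\subset K$ that is finitely generated over $\mathbb{Q}$ and hence embeddable into $\mathbb{C}$. A conjugation producing any of the claimed normal forms can be carried out over $K$ after adjoining finitely many algebraic elements to $K_0$, so it suffices to appeal to the classical Blichfeldt classification of finite subgroups of $\mathrm{SL}_3(\mathbb{C})$, which is the content of \cite[Chapter XI]{Mi-Bl-Di}.

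Next I would handle the intransitive and imprimitive cases by hand. In the intransitive case, Maschke's theorem provides a $G$-stable decomposition of $V=K^3$ as $1+1+1$ or $1+2$. The first possibility makes $G$ simultaneously diagonalizable and yields type (A); the second, with an irreducible $2$-dimensional summand, yields the block form of type (B) after a choice of basis, the upper-left scalar being forced to equal $\det(g)^{-1}$ by the $\mathrm{SL}_3$-condition. In the imprimitive but transitive case, $V$ decomposes into three lines transitively permuted by $G$. The kernel of the permutation action on lines is a diagonal normal subgroup (a group of type (A)), and the image is a transitive subgroup of $S_3$. When the image equals $\langle T\rangle\cong A_3$ we obtain type (C); when the image is all of $S_3$, any generator lifting a transposition can be brought to the form $R_{a,b,c}$ via a diagonal conjugation, with $abc=-1$ forced by $\det=1$, giving type (D).

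The main obstacle is the primitive case, which I would delegate to \cite[Chapter XI, Section 106]{Mi-Bl-Di} and \cite[Chapter 1]{Yau-Yu}. The classical argument picks a maximal abelian normal subgroup of $G$; if it were noncentral, its joint eigenspace decomposition would force $G$ to be intransitive or imprimitive, so it must lie in the centre $F$ of $\mathrm{SL}_3$. The quotient $G/(G\cap F)$ is then analysed as a finite subgroup of $\mathrm{PGL}_3(\mathbb{C})$ acting on $\mathbb{P}^2$, and an explicit study of its invariant curves (the Hesse pencil of cubics, the Klein quartic, the Valentiner sextic) produces exactly the Hessian groups (E), (F), (G), the icosahedral group (H) with its central extension (J), the Klein group (I) with its central extension (K), and the Valentiner group (L), together with the explicit generators $S$, $T$, $U$, $V$ realising the corresponding projective representations. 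Since this analysis is lengthy and classical, I would confine myself to citing the two references rather than reproducing it.
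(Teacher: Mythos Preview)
Your proposal is correct and follows essentially the same strategy as the paper: reduce the general algebraically closed characteristic-zero case to $\mathbb{C}$ via an embedding of a finitely generated subfield, then invoke the classical Blichfeldt classification in \cite{Mi-Bl-Di} and \cite{Yau-Yu}. The paper's own proof is in fact nothing more than these two citations plus the embedding remark, so your added sketches of the intransitive and imprimitive cases (via Maschke and the permutation action on the three lines) simply flesh out what the references already contain; note only the minor slip that in type~(B) the scalar $\det(g)^{-1}$ sits in the lower-right entry, not the upper-left, and that the relevant chapter in \cite{Mi-Bl-Di} is~XII rather than~XI.
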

\begin{proof}
See \cite[Chapter XII]{Mi-Bl-Di} and \cite[Chapter 1]{Yau-Yu} for the case $K=\mathbb{C}$. For an arbitrary field $K$, note that every finite subgroup of $\mathrm{SL}_3(K)$ can be embedded into $\mathrm{SL}_3(\mathbb{C})$.
\end{proof}

\begin{remark}\label{4.3}
 Here groups of type (A) and (B) are intransitive; groups of type~(C) and (D) are imprimitive; and all the remaining groups are primitive.
\end{remark}

Consider the natural projection $\pi:\mathrm{SL}_3(\bar{K})\rightarrow \mathrm{PGL}_3(\bar{K})$. In the sequel, we adopt the following notation: we say a group is of some type in $\mathrm{PGL}_3(K)$, for example, of type (E), when the group is isomorphic to $\pi(G)$ for some finite subgroup $G\subset \mathrm{SL}_3(\bar{K})$ of type (E).

\begin{corollary}\label{4.4}
A finite subgroup $G$ of $\mathrm{PGL}_3(K)$ is isomorphic to a group of one of the following 10 types:

(A) abelian group;

(B) finite subgroup of $\mathrm{GL}_2(K)$;

(C) group generated by group of type (A) and $T$;

(D) group generated by group of type (C) and $R'_{a,b,c}$ for some $a,b$ and $c$ in~$K$;

(E) group of order 36, which is isomorphic to $(C_3\times C_3)\rtimes C_4$, generated by~$S$, $T$ and $V$;

(F) group of order 72 generated by (E) and $UVU^{-1}$;

(G) Hessian group of order 216, which is isomorphic to $(C_3\times C_3)\rtimes \mathrm{SL}_2(\mathbb{F}_3)$, generated by (E) and $U$;

(H) the simple alternating group $A_5$ of order 60;

(I) the simple group $\mathrm{PSL}_2(\mathbb{F}_7)$ of order 168;

(L) the simple alternating group $A_6$ of order 360.

Here 
$$
R'_{a,b,c}=\begin{pmatrix}
a & 0 & 0\\
0 & 0 & b \\
0 & c & 0
\end{pmatrix} $$
with $abc\neq 0$.
\end{corollary}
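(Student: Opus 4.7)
The plan is to reduce Corollary~\ref{4.4} to Theorem~\ref{4.2} via the central extension
\[
1 \to F \to \mathrm{SL}_3(\bar K) \to \mathrm{PGL}_3(\bar K) \to 1,
\]
with $F = \{I,\omega I,\omega^2 I\}$. For any finite $G \subset \mathrm{PGL}_3(K) \subset \mathrm{PGL}_3(\bar K)$, the preimage $\widetilde G := \pi^{-1}(G)$ is finite of order $3|G|$, hence by Theorem~\ref{4.2} conjugate in $\mathrm{SL}_3(\bar K)$ to one of the 12 listed types. Since $F \subset \widetilde G$ by construction, $G \cong \widetilde G / F$, so it remains to identify this quotient for each of the 12 types and then record which types collapse.

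For types (A)--(D) the analysis is essentially direct. The diagonal subgroup modulo the scalar diagonal $F$ stays abelian, yielding type (A). For (B), the map $(g,\det(g)^{-1}) \mapsto g$ already sends $\widetilde G$ into $\mathrm{GL}_2(\bar K)$; more invariantly, the stabilizer in $\mathrm{PGL}_3(K)$ of a point of $\mathbb{P}^2(K)$ is $\mathrm{GL}_2(K) \ltimes K^2$, whose unipotent radical is torsion-free in characteristic $0$, so any finite subgroup of this stabilizer projects isomorphically into $\mathrm{GL}_2(K)$. For (C) and (D), the generators $T$ and $R_{a,b,c}$ map to themselves modulo scalars; the constraint $abc = -1$ weakens to $abc \neq 0$ because in $\mathrm{PGL}_3(K)$ the rescaling $\lambda R_{a,b,c} = R_{\lambda a,\lambda b,\lambda c}$ is free. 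For the simple types, $A_5$ and $\mathrm{PSL}_2(\mathbb{F}_7)$ have trivial center, so $\widetilde G \cap F = \{1\}$ and $\pi|_{\widetilde G}$ is injective, preserving the orders $60$ and $168$; this gives (H) and (I). Types (J) and (K) are by construction $F \cdot A_5$ and $F \cdot \mathrm{PSL}_2(\mathbb{F}_7)$, so their images under $\pi$ coincide with (H) and (I) respectively---this is precisely why the $12$ types in $\mathrm{SL}_3$ collapse to $10$ in $\mathrm{PGL}_3$. Finally, (L) satisfies $\widetilde G/F \cong A_6$ by definition, so its image is the simple group $A_6$ of order $360$.

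For the primitive types (E), (F), (G), a short matrix computation gives $TST^{-1} = \omega S$, whence $[T,S] = \omega I \in F$. Hence $\langle S,T\rangle$ is the Heisenberg group of order $27$ with center exactly $F$, so $\pi(\langle S,T\rangle) \cong C_3 \times C_3$. Since $V^2$ is a nonscalar signed permutation matrix, $\pi(V)$ has order $4$ and normalizes $\pi(\langle S,T\rangle)$, yielding $\pi((\mathrm{E})) \cong (C_3\times C_3)\rtimes C_4$ of order $36$; $\pi((\mathrm{F}))$ then has order $72$ by the same count. For (G), I will similarly verify that $\pi(\langle U,V\rangle)$ realizes the standard faithful action of $\mathrm{SL}_2(\mathbb{F}_3)$ on $\mathbb{F}_3^2 \cong C_3 \times C_3$, giving the Hessian group $(C_3\times C_3)\rtimes\mathrm{SL}_2(\mathbb{F}_3)$ of order $216$. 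The hard part is precisely this last identification: while the order count and the action on $C_3\times C_3$ are immediate, recognizing the complement in $\pi((\mathrm{G}))$ as exactly $\mathrm{SL}_2(\mathbb{F}_3)$---and not some other group of order $24$---will require a careful check of the commutation relations between $\pi(U)$ and $\pi(V)$.
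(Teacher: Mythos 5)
Your overall strategy is the same as the paper's: pass to the preimage $\widetilde G=\pi^{-1}(G)\subset\mathrm{SL}_3(\bar K)$, invoke Theorem~\ref{4.2}, and identify $\widetilde G/F$ type by type, observing that (J) and (K) collapse onto (H) and (I). The identifications for (A), (C)--(L) are fine in outline (your computation $TST^{-1}=\omega S$, $V^2=-R'_{1,1,1}$ checks out, and the simple-group cases are routine), modulo the minor slip that the full preimage of a simple image is $F\times A_5$ (type (J)) rather than type (H) itself --- harmless, since the quotient by $F$ is the same.

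The genuine gap is in type (B), which is exactly where the paper spends almost all of its effort. Theorem~\ref{4.2} only tells you that $\widetilde G$ is conjugate \emph{in $\mathrm{SL}_3(\bar K)$} to a block-diagonal group, so a priori $G$ fixes a point and a line of $\mathbb{P}^2_{\bar K}$, not of $\mathbb{P}^2_K$. Your first remark therefore only yields that $G$ is isomorphic to a finite subgroup of $\mathrm{GL}_2(\bar K)$ (and even that needs care, since $G\cong\widetilde G/F$ rather than $\widetilde G$ itself), which is strictly weaker than the statement ``finite subgroup of $\mathrm{GL}_2(K)$'': the assertion over $K$ carries real arithmetic content, as Theorem~\ref{thm1.3} shows. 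Your ``more invariantly'' argument via the parabolic $\mathrm{GL}_2(K)\ltimes K^2$ and the torsion-freeness of its unipotent radical is a nice way to finish, but it presupposes that the fixed point lies in $\mathbb{P}^2(K)$, which you never justify. The paper closes this gap by a Galois-descent argument: for non-abelian $G$ of type (B) the invariant point and invariant line are \emph{unique} (the $2$-dimensional piece is irreducible, the abelian case having been filed under (A)), hence stable under $\mathrm{Gal}(\bar K/K)$, hence defined over $K$. You need to add this uniqueness-plus-descent step; with it, your parabolic argument does complete the type (B) case, arguably more cleanly than the paper's re-scaling of the block form.
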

\begin{proof}
Let $G$ be a finite subgroup of $\mathrm{PGL}_3(K)$, then $G$ can be regarded as a finite subgroup of $\mathrm{PGL}_3(\bar{K})$ via the natural embedding $\mathrm{PGL}_3(K)\hookrightarrow \mathrm{PGL}_3(\bar{K})$. Let $\Tilde{G}\subset\mathrm{SL}_3(\bar{K})$ be the preimage of $G$ via $\pi$. Then $\Tilde{G}$ is conjugate to one of the 12 types of groups in Theorem \ref{4.2}, and $G$ is isomorphic to the projection of such a group. 

Groups of type (A) may not be diagonal when the field $K$ is not algebraically closed, but they're still abelian. 

Now let $G$ be a finite non-abelian (if it's abelian, then sort it into type (A)) subgroup of $\mathrm{PGL}_3(K)$ such that the corresponding $\Tilde{G}\subset \mathrm{SL}_3(\bar{K})$ is conjugate to a group of type (B). That is, we have a unique decomposition of the vector field $\bar{K}^{3}=V\oplus U$, where $\dim V=2, \dim U=1$, such that $\pi(V)$ and $\pi(U)$ are the only invariant line and invariant point of $G$ acting on $\mathbb{P}^2_{\bar{K}}$. Pick an arbitrary element $\sigma\in \mathrm{Gal}(\bar{K}/K)$, note that $G=\pi(\Tilde{G})=\pi(\sigma(\Tilde{G}))$, since $G$ is defined over~$K$. And $\pi(\sigma(V))$ is also an invariant line of $G$, hence it must be $\pi(V)$. That is, the subspace $V$ is preserved by the action of all elements in the Galois group, hence it is defined over $K$. Similarly, $\pi(U)$ is the unique invariant point of $G$, so it's preserved by all elements in the Galois group, hence it's a point over $K$. Therefore, we find an invariant line and an invariant point outside the line for $G$ when it's regarded as a subgroup of $\mathrm{Aut}(\mathbb{P}^2_{K})$. That is, $G$ is conjugate to a group in which every matrix is of the form 
$$\begin{pmatrix}
g & 0\\
0 & a
\end{pmatrix},$$
where $g\in\mathrm{GL}_2(K), a\in K^{\times}$. Up to re-scaling, we may assume that $a=1$, and then $G$ is isomorphic to a finite subgroup in $\mathrm{GL}_2(K)$, consisting of all the $2\times 2$ matrices $g$ appearing in the upper left block. 

Note that for a group $\tilde{G}\subset \mathrm{SL}_3(\bar{K})$ of type (A), 
the projection $\pi(\Tilde{G})$ will always be abelian, i.e. of type (A). Therefore for a group $\Tilde{G}\subset \mathrm{SL}_3(\bar{K})$ of type~(C) or (D), its projection $\pi(\Tilde{G})$ is again generated by a group of type (A) and some other specific matrices.

A group $\Tilde{G}\subset \mathrm{SL}_3(\bar{K})$ of type (E)--(G) or (J)--(L) contains the kernel of the projection $F$, therefore $G\cong \pi(\Tilde{G})$ is isomorphic to $\Tilde{G}/F$, and $|G|=|\Tilde{G}|/3$. On the other hand, groups $\Tilde{G}\subset \mathrm{SL}_3(\bar{K})$ of type (H) and (I) are simple, so~\mbox{$\pi(\Tilde{G})\cong G$}. In particular, projections of groups of type (H) and (J) to $\mathrm{PGL}_3(\bar{K})$ are isomorphic to each other, and the same happens with groups of type (I) and (K). 
\end{proof}

\begin{remark}\label{4.5}
We restrict ourselves to fields of characteristic 0 because Theorem \ref{4.2} does not hold for fields with positive characteristics. For instance, when~\mbox{$K= \bar{\mathbb{F}}_p$}, the group $\mathrm{PGL}_3(K)$ contains $\mathrm{PSL}_3(\mathbb{F}_q)$, where $q=p^{n}$. These subgroups are simple, hence the Jordan constant of $\mathrm{PGL}_3(K)$ reaches infinity. 
\end{remark}




\section{Primitive finite subgroups in $\mathrm{PGL}_3(K)$}

In this section, we give the criterion of the existence of each primitive subgroup in  $\mathrm{PGL}_3(K)$, for an arbitrary field $K$ of characteristic 0.



\begin{proposition}\label{5.1}
  Let $n$ be an odd prime number, then $\mathrm{PGL}_3(K)$ has an cyclic subgroup of order $n$ if and only if there is an $n$-th primitive root of unity $\alpha$ and indices $1< i< j\leq n$, with $1+i+j\equiv 0 $ (mod n) such that $$\alpha+\alpha^i+\alpha^j\in K,\   \alpha^{-1}+\alpha^{-i}+\alpha^{-j}\in K.$$
\end{proposition}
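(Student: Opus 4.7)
My plan is to identify cyclic subgroups of order $n$ in $\mathrm{PGL}_3(K)$ with matrices in $\mathrm{SL}_3(K)$ via Lemma~\ref{3.6}, then read off the stated condition from the eigenvalues. The case $n=3$ I would dispose of at once: the permutation matrix $T$ of Theorem~\ref{4.2} is defined over any $K$ and has order $3$ in $\mathrm{PGL}_3(K)$, while the right-hand condition holds trivially with $i=2,\ j=3$ since $\alpha+\alpha^2+1=0\in K$. So the real work concerns odd primes $n\geq 5$, where $\gcd(n,3)=1$ and Lemma~\ref{3.6} applies.

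For the forward direction, I would lift $C_n\subset\mathrm{PGL}_3(K)$ to a matrix $B\in\mathrm{SL}_3(K)$ with $B^n=I$; since $n$ is prime and $B\neq I$, the order is exactly $n$. The three eigenvalues in $\bar K$ are $n$-th roots of unity with product $1$, and I would split on their multiplicities. If they are pairwise distinct, I pick any primitive one as $\alpha$, write the remaining two as $\alpha^i,\alpha^j$ with $i,j\in\{2,\dots,n\}$ (the value $1$ is excluded precisely by distinctness), reorder so $i<j$, and the product condition gives $1+i+j\equiv 0\pmod n$; then $\mathrm{tr}(B)=\alpha+\alpha^i+\alpha^j\in K$ and $\mathrm{tr}(B^{-1})=\alpha^{-1}+\alpha^{-i}+\alpha^{-j}\in K$ are the two required relations. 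The two-equal case, where the eigenvalues are $\zeta,\zeta,\zeta^{-2}$, is the delicate step: since these multiplicities $2$ and $1$ differ and $\zeta\neq\zeta^{-2}$ (which would force $n\mid 3$), the Galois group of $\bar K/K$ must fix each eigenvalue pointwise, so $\zeta\in K$; I then replace $B$ by $\mathrm{diag}(1,\zeta,\zeta^{-1})\in\mathrm{SL}_3(K)$ to reduce to the distinct case. The all-three-equal configuration is ruled out for the same divisibility reason.

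For the backward direction, I would produce the companion matrix $B$ of $p(x)=(x-\alpha)(x-\alpha^i)(x-\alpha^j)$. Using $1+i+j\equiv 0\pmod n$ to rewrite $\alpha^{1+i}=\alpha^{-j}$ and similarly for the other terms, the middle coefficient of $p$ becomes $\alpha^{-1}+\alpha^{-i}+\alpha^{-j}$, so every coefficient of $p$ lies in $K$ by hypothesis; hence $B\in\mathrm{GL}_3(K)$. The eigenvalues $\alpha,\alpha^i,\alpha^j$ are pairwise distinct (since $1,i,j$ are distinct modulo $n$ when $1<i<j\leq n$), so $B$ is diagonalizable over $\bar K$ with $B^n=I$. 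To show that the class of $B$ in $\mathrm{PGL}_3(K)$ has order exactly $n$, I would observe that $B^k$ is a scalar iff $\alpha^k=\alpha^{ik}=\alpha^{jk}$, and for $0<k<n$ with $n$ prime this forces $i\equiv j\equiv 1\pmod n$, contradicting the range of $i$ and $j$.

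The main obstacle is the two-equal-eigenvalue case in the forward direction: the argument hinges on the asymmetry of multiplicities to force $\zeta\in K$ via Galois invariance, after which replacing $B$ by $\mathrm{diag}(1,\zeta,\zeta^{-1})$ cleanly lands in the distinct-eigenvalue case already handled. Everything else is a bookkeeping exercise in matching exponents modulo $n$ to the index window $1<i<j\leq n$.
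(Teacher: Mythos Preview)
Your proof is correct and follows the same overall architecture as the paper: dispose of $n=3$, lift to $\mathrm{SL}_3(K)$ via Lemma~\ref{3.6}, case-split on eigenvalue multiplicities, and for the converse build a matrix with the right characteristic polynomial. The technical choices differ in two places, and both of yours are arguably cleaner. In the double-root case the paper performs Euclidean division of $\Phi_n$ by the characteristic polynomial to force the product of the two distinct eigenvalues into $K$, whereas you simply observe that a Galois automorphism must preserve multiplicities and hence fixes $\zeta$ individually; your argument is shorter and avoids the somewhat ad~hoc subcase analysis. For the converse, the paper writes down an explicit $3\times 3$ matrix with the desired characteristic polynomial, while you use the companion matrix, which makes the verification that $p\in K[x]$ transparent once you rewrite the middle coefficient via $1+i+j\equiv 0$. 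You are also more careful than the paper in checking that the image in $\mathrm{PGL}_3(K)$ genuinely has order $n$ rather than just that the matrix has order $n$ in $\mathrm{SL}_3(K)$.
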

\begin{proof}
The case $n=3$ is clear. For $n\neq 3$, we may lift a generator of the cyclic subgroup to a matrix $A\in \mathrm{SL}_3(K)$ of the same order, by Lemma \ref{3.6}. Let $f(x)$ be the characteristic polynomial of $A$. 

Case 1: Suppose $f$ has a multiple root of order $3$, i.e. $f(x)=(x-\alpha)^3\in K[x]$, then $\alpha\in K$. Our conditions are clearly satisfied.

Case 2: Suppose $f$ has a multiple root of order $2$. Let $\zeta, \lambda$ be two distinct roots of $f$, and do the Euclidean Algorithm. Write $$\Phi_n(x)=f(x)q(x)+r(x),$$ with $\deg r(x)=1$ or $2$, where $\Phi_n(x)$ is the $n$-th cyclotomic polynomial. Note that both $\zeta$ and $\lambda$ are roots of $r(x)$, so $$r(x)=(x-\zeta)(x-\lambda)\in K[x],\ \zeta\lambda\in K.$$
If $\zeta\lambda=1$, then $\lambda=\zeta^{-1}$ and $\zeta+\lambda=\zeta+\zeta^{n-1}\in K$. We may pick $\alpha=\zeta$ and indices $(1,n-1,n)$ to fit the conditions. Otherwise, $K$ contains an $n$-th primitive root of unity $\zeta\lambda$, which will be chosen as $\alpha$.

Case 3: Suppose $f$ has three simple roots, denote one of them by $\alpha$, such that the other two can be presented as $\alpha^i$ and $\alpha^j$. The fact that $\alpha^{1+i+j}\in K$ implies that either $K$ contains an $n$-th primitive root of unity or $$1+i+j\equiv 0\ (\text{mod}\ n).$$ For the latter case, the coefficients of square and linear terms are $$-(\alpha+\alpha^i+\alpha^j), \  \alpha^{-1}+\alpha^{-i}+\alpha^{-j},$$ which are exactly what we want.

 Conversely, if we have such $\lambda=\alpha+\alpha^i+\alpha^j$ and $\eta=\alpha^{-1}+\alpha^{-i}+\alpha^{-j}$, consider
$$A=\left(
                                                                  \begin{array}{ccc}
                                                                    \lambda+1 & 1 & -\lambda-\eta-2 \\
                                                                    1 & 0 & 0 \\
                                                                    1 & 0 & -1 \\
                                                                  \end{array}
                                                                \right).$$
Its characteristic polynomial is $f(x)=x^3-\lambda x^2+\eta x-1$, so the three eigenvalues of $A$ are $\alpha,\alpha^i$ and $\alpha^j$, hence the order of $A$ is $n$.
\end{proof}



\begin{lemma}\label{5.2}
The group $\mathrm{PGL}_3(K)$ contains $C_3\times C_3$ if and only if $\omega\in K$.
\end{lemma}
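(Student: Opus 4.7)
\textit{Plan.} The plan is to prove the two directions separately. For the reverse direction, if $\omega\in K$, then both $S=\mathrm{diag}(1,\omega,\omega^2)$ and the cyclic permutation matrix $T$ of Theorem~\ref{4.2} lie in $\mathrm{GL}_3(K)$; a direct computation yields $ST=\omega^2 TS$, so $\pi(S),\pi(T)\in\mathrm{PGL}_3(K)$ commute, each has order $3$, and since no nontrivial $S^iT^j$ (with $0\le i,j\le 2$, $(i,j)\ne(0,0)$) is scalar, they generate $C_3\times C_3$.

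For the forward direction, I will lift two generators $g_1,g_2$ of $C_3\times C_3\subset\mathrm{PGL}_3(K)$ to $A,B\in\mathrm{GL}_3(K)$, so that $A^3=aI$ and $B^3=bI$ for some $a,b\in K^*$, and $AB=\mu BA$ for some $\mu\in K^*$. Cubing the commutation relation yields $\mu^3=1$, so either $\mu\ne 1$ is a primitive cube root of unity in $K$ (immediately giving $\omega\in K$), or $\mu=1$ and $A,B$ commute in $\mathrm{GL}_3(K)$. In the latter case, $A$ is diagonalizable over $\bar K$ with eigenvalues in $\{\alpha,\omega\alpha,\omega^2\alpha\}$ for some $\alpha$ with $\alpha^3=a$, and $A$ is not scalar (otherwise $\pi(A)=e$). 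If $A$ has exactly two distinct eigenvalues, with multiplicities $(m_1,m_2)$, then $\det A=a\omega^j\in K^*$ for some $j\in\{1,2\}$, forcing $\omega\in K$. Otherwise $A$ has three distinct eigenvalues and characteristic polynomial $x^3-a$; this either splits completely in $K$ (giving $\omega\in K$), or factors as $(x-\alpha)(x^2+\alpha x+\alpha^2)$ with $\alpha\in K$ and irreducible quadratic, or is itself irreducible. In the last two subcases $\omega\notin K$ and I aim to show $\pi(B)\in\langle\pi(A)\rangle$, contradicting the $C_3\times C_3$ hypothesis.

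The main obstacle will be proving $\pi(B)\in\langle\pi(A)\rangle$ in these two subcases. Since $A$ has distinct eigenvalues, its centralizer in $M_3(K)$ equals $K[A]$: a cubic field $K[x]/(x^3-a)$ in the irreducible subcase, and isomorphic to $K\times K(\omega)$ by the Chinese Remainder Theorem in the reducible subcase. Writing $B=c_0 I+c_1 A+c_2 A^2$ with $c_i\in K$ and imposing $B^3\in K^*\cdot I$, I expect a direct computation to force $B$ to be a $K^*$-multiple of $I$, $A$, or $A^2$. Concretely, in the irreducible subcase expanding $B^3\bmod(x^3-a)$ and setting the $A$- and $A^2$-coefficients to zero produces a pair of cubic equations whose only solutions with $c_0 c_1 c_2\ne 0$ would force $c_1^3 = a c_2^3$, contradicting $a\notin(K^*)^3$; the reducible subcase reduces to the same type of relation on the $2\times 2$ block, where the role of $K[A]$ is played by $K[A|_W]\cong K(\omega)$. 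This ultimately reflects the Kummer-theoretic fact that, for a cubic field extension $L/K$ with $\omega\notin K$, the $3$-torsion of $L^*/K^*$ has order at most $3$.
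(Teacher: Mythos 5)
Your proposal is correct, and for the forward direction it takes a genuinely different route from the paper. Both arguments begin identically: lift generators to $A,B\in\mathrm{GL}_3(K)$ with $A^3=aI$, $B^3=bI$, $AB=\mu BA$, and note $\mu^3=1$, so $\mu\neq1$ already yields $\omega\in K$. From there the paper avoids your eigenvalue case analysis: it passes to an auxiliary extension $L\supset K$ of degree a power of $3$ containing cube roots of $a$ and $b$ (such an $L$ still cannot contain $\omega$, since $[K(\omega):K]=2$ does not divide a power of $3$), rescales so that $A^3=B^3=I$ with $\det A=\det B=1$, and then simultaneously diagonalizes over $\bar K$; the resulting $C_3\times C_3$ would have to be the full group of nine diagonal matrices of order dividing $3$ in $\mathrm{SL}_3(\bar K)$, which contains the scalars $\omega I,\omega^2I$ — a contradiction. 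You instead stay over $K$ and exploit the centralizer $K[A]$, reducing the commuting case to the fact that the $3$-torsion of $K[x]/(x^3-a)^{\times}$ modulo $K^{\times}$ is generated by the class of $x$ when $\omega\notin K$. The computation you defer does close as claimed: vanishing of the $A$- and $A^2$-coefficients of $B^3$ gives $c_0^2c_1+ac_1^2c_2+ac_0c_2^2=0$ and $c_0c_1^2+c_0^2c_2+ac_1c_2^2=0$, elimination yields $c_0(ac_2^3-c_1^3)=0$, so all $c_i\neq0$ forces $a\in(K^{\times})^3$, a contradiction; you should also record the (immediate) cases with exactly one $c_i=0$, e.g.\ $c_0=0$ forces $ac_1^2c_2=0$. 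The reducible subcase works because every cube root of unity lies in $K(\omega)$, so $\beta_2=\omega^j\beta_1$ and $B\in K^{\times}A^j$. The trade-off: the paper's proof is shorter, resting only on the degree obstruction and a counting argument over $\bar K$, while yours is more computational but entirely rational over $K$ and makes explicit where the obstruction lives, namely in the unit group of $K[A]$.
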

\begin{proof}
If $\omega\in K$, then matrices $S$ and $T$ given in Theorem \ref{4.2} will generate~\mbox{$C_3\times C_3$}.

For the other direction, we will prove by contradiction. Assume $\omega$ is not in $K$, and $\mathrm{PGL}_3(K)$ contains $C_3\times C_3$.  Let $A, B\in \mathrm{GL}_3(K)$ denote  representatives of the generators of each $C_3$, respectively. Assume that~\mbox{$\det(A)=\lambda\in K$},~\mbox{$\det(B)=\mu\in K$}, then $A^3=\lambda I, B^3=\mu I$. 

If $\lambda\in K$ does not have a cubic root in $K$, we may construct the field extension $L=K[x]/(x^3-\lambda)$ of degree $3$. If $\omega\in L$, then $K\subset K(\omega)\subset L$ is a subextension of $L$ of degree 2, which is impossible, as $3$ is not divisible by 2. Therefore $L$ contains exactly one cubic root of $\lambda$ and does not contain $\omega$. We may repeat this procedure once again if $\mu$ does not have a cubic root in the new field, and finally obtain a larger field $K\subset L$ such that $L$ contains cubic root of both $\lambda$ and $\mu$, but not $\omega$. 

Now we may assume $A^3=B^3=I$, after replacing $A$ by $\frac{1}{\sqrt[\leftroot{-2}\uproot{2}3]{\lambda}}A$, $B$ by $\frac{1}{\sqrt[\leftroot{-2}\uproot{2}3]{\mu}}B$, and $K$ by $L$. These two matrices commute in $\mathrm{PGL}_3(K)$, thus there exists $\alpha\in K$ such that $AB=\alpha BA$. By computing determinants of matrices at both sides, we get $\alpha=1$ as $\omega\not\in K$. That is, they also commute in $\mathrm{SL}_3(K)$. Therefore $A$ and $B$ generate $C_3\times C_3$ in $\mathrm{SL}_3(K)$, thereby in $\mathrm{SL}_3(\bar{K})$, via the canonical embedding. Note that $C_3\times C_3$ has only one-dimensional irreducible representations since it's abelian, so we can diagonalize $A$ and $B$ simultaneously in $\mathrm{SL}_3(\bar{K})$. Eventually, we achieve a subgroup $C_3\times C_3\subset \mathrm{SL}_3(\bar{K})$ consisting of diagonal matrices, which does not contain any scalar matrices other than $I$. However, this is impossible, since $\mathrm{SL}_3(\bar{K})$ contains exactly 8 diagonal matrices of order 3, and two of them are scalar matrices.
\end{proof}

\begin{proposition}\label{5.3}
The group $\mathrm{PGL}_3(K)$ contains subgroups of type (E), (F) and~(G) if and only if $\omega\in K$.
\end{proposition}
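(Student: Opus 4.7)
The plan is to prove each direction using Corollary \ref{4.4} and Lemma \ref{5.2}.

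For the ``only if'' direction, I would observe that each of the three types (E), (F), (G) contains a subgroup isomorphic to $C_3\times C_3$: type (E) is by definition $(C_3\times C_3)\rtimes C_4$, type (G) is $(C_3\times C_3)\rtimes \mathrm{SL}_2(\mathbb{F}_3)$, and (F) sits between them. Hence the existence of any of these in $\mathrm{PGL}_3(K)$ forces $\mathrm{PGL}_3(K)$ to contain $C_3\times C_3$, and Lemma \ref{5.2} then yields $\omega\in K$.

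For the ``if'' direction, since type (G) contains types (E) and (F) by the generator description in Theorem \ref{4.2}, it suffices to exhibit a type (G) subgroup in $\mathrm{PGL}_3(K)$ under the assumption $\omega\in K$. I would take the explicit generators $S$, $T$, $V$, $U$ from Theorem \ref{4.2}, but rescale the two problematic ones: $V$ carries the scalar $1/\sqrt{-3}$ and $U$ carries $\epsilon$ with $\epsilon^3=\omega^2$, neither of which need lie in $K$. Replacing them by $V':=\sqrt{-3}\,V$ (a matrix with entries $1,\omega,\omega^2$) and $U':=\epsilon^{-1}U=\mathrm{diag}(1,1,\omega)$ gives matrices whose entries all lie in $K$ as soon as $\omega\in K$. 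Since scalar factors vanish in $\mathrm{PGL}_3$, the subgroup of $\mathrm{PGL}_3(K)$ generated by the images of $S$, $T$, $V'$, $U'$ coincides, after the embedding $\mathrm{PGL}_3(K)\hookrightarrow \mathrm{PGL}_3(\bar K)$, with the projection of $\langle S,T,V,U\rangle\subset\mathrm{SL}_3(\bar K)$, i.e.\ with the Hessian group of type (G).

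The main thing left to verify is that this projected subgroup really has order $216$, i.e.\ that the rescalings have not accidentally collapsed anything. This is immediate because $F=\{I,\omega I,\omega^2 I\}$ is already the kernel of $\pi:\mathrm{SL}_3(\bar K)\to\mathrm{PGL}_3(\bar K)$, so the images in $\mathrm{PGL}_3(\bar K)$ of our rescaled generators agree with those of the original ones. Taking the subgroups generated by $S$, $T$, $V'$ and by $S$, $T$, $V'$, $U'V'U'^{-1}$ then yields subgroups of types (E) and (F) respectively. I do not expect any serious obstacle beyond the scalar bookkeeping described above.
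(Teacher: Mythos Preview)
Your proposal is correct and follows essentially the same approach as the paper: both directions rely on Lemma \ref{5.2} (via the $C_3\times C_3$ subgroup) for ``only if'', and on the explicit generators of Theorem \ref{4.2} for ``if''. Your treatment of the ``if'' direction is more careful than the paper's, which simply points to Theorem \ref{4.2} without commenting on the scalars $\sqrt{-3}$ and $\epsilon$; your rescaling to $V'$ and $U'$ makes explicit why the construction descends to $\mathrm{PGL}_3(K)$ once $\omega\in K$.
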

\begin{proof}
If $\omega\in K$, then we refer to Theorem \ref{4.2} for the explicit construction.

Conversely, if $\mathrm{PGL}_3(K)$ contains a subgroup of one of these three types, then it contains a subgroup of the form $C_3\times C_3$. We are done after applying Lemma \ref{5.2}.
\end{proof}

\begin{proposition}\label{5.4}
The group $\mathrm{PGL}_3(K)$ contains a subgroup of type (H) if and only if $\sqrt{5}\in K$.
\end{proposition}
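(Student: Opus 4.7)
The plan is to relate the existence of $A_5 \subset \mathrm{PGL}_3(K)$ to the existence of a faithful three-dimensional representation of $A_5$ over $K$, and then exploit the fact that the two irreducible three-dimensional characters of $A_5$ take the values $(1 \pm \sqrt{5})/2$ on a $5$-cycle.

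For the ``only if'' direction, I will apply Lemma \ref{5.5} with $n=3$ and $r=5$ (which are coprime, and $A_5$ is generated by its $5$-cycles) to lift $A_5$ to a finite subgroup $\tilde{G}\subset \mathrm{SL}_3(K)$ with $\pi(\tilde{G}) = A_5$. The construction via Lemma \ref{3.6} ensures that the lift of a $5$-cycle has order exactly $5$, so $\tilde{G}$ contains some $\tilde{g}$ of order $5$ that projects onto a $5$-cycle. The inclusion $\tilde{G}\hookrightarrow \mathrm{SL}_3(K)\subset \mathrm{GL}_3(\bar{K})$ is then a faithful three-dimensional representation of $\tilde{G}$, which is a central extension of $A_5$ by a subgroup of $F\cap \mathrm{SL}_3(K)$ of order $1$ or $3$. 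Using that the Schur multiplier of $A_5$ is $C_2$, so $H^2(A_5,C_3)=0$, any such extension is either $A_5$ itself or splits as $A_5\times C_3$; in either case the faithful three-dimensional representation must arise from one of the two irreducible $3$-dimensional representations of $A_5$ (possibly tensored with a nontrivial character of $C_3$). A short case check shows that an element of order exactly $5$ in $\tilde{G}$ always lies over $(g,1)$ for $g$ a $5$-cycle, so its trace equals the $A_5$-character value $(1\pm\sqrt{5})/2$. Since $\mathrm{tr}(\tilde{g})\in K$, this gives $\sqrt{5}\in K$.

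For the ``if'' direction, I will use the classical realization of $A_5$ as the rotation group of the regular icosahedron in $\mathbb{R}^3$. The rotation matrices have entries in $\mathbb{Z}[(1+\sqrt{5})/2]\subset \mathbb{Q}(\sqrt{5})\subset K$, giving an injection $A_5\hookrightarrow \mathrm{GL}_3(K)$. Composition with the projection $\pi:\mathrm{GL}_3(K)\to \mathrm{PGL}_3(K)$ remains injective, because this three-dimensional representation is absolutely irreducible and the center of $A_5$ is trivial, so no nontrivial element maps to a scalar matrix.

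The main obstacle is the representation-theoretic bookkeeping in the forward direction: making sure that the trace of the order-$5$ lift $\tilde{g}$ lies in $\mathbb{Q}(\sqrt{5})$ rather than in some larger field such as $\mathbb{Q}(\omega,\sqrt{5})$. This is handled by observing that any central $C_3$-extension of $A_5$ splits, and that the unique order-$5$ preimage of a $5$-cycle has trivial $C_3$-component, so no cube root of unity can creep into its trace.
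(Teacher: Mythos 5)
Your proof is correct, but it routes around the paper's argument in the ``only if'' direction. The paper simply observes that a subgroup of type (H) contains an element of order $5$ and then invokes Proposition \ref{5.1} with $n=5$: the only admissible index pair is $(i,j)=(4,5)$, so $\alpha+\alpha^{-1}+1\in K$ for a primitive fifth root of unity $\alpha$, whence $\sqrt{5}\in K$. You instead lift all of $A_5$ to $\mathrm{SL}_3(K)$ via Lemma \ref{5.5}, use $H^2(A_5,C_3)=0$ to identify the lift as $A_5$ or $A_5\times C_3$, and extract $\sqrt{5}$ from the character value $(1\pm\sqrt{5})/2$ of a $5$-cycle in a three-dimensional irreducible representation; this is heavier machinery for the same conclusion (the single-element lift via Lemma \ref{3.6} plus the trace already suffices, which is exactly what Proposition \ref{5.1} packages), though it does yield the extra information that the preimage of $A_5$ in $\mathrm{SL}_3$ splits. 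Your care in checking that an order-$5$ element of $A_5\times C_3$ has trivial $C_3$-component, so no cube root of unity contaminates the trace, is exactly the point that needs checking, and you handle it correctly. In the ``if'' direction the two arguments are essentially the same in substance --- an explicit model of a three-dimensional irreducible representation of $A_5$ over $\mathbb{Q}(\sqrt{5})$ --- the paper verifying the presentation $\langle x,y\mid x^5=y^2=(xy)^3=1\rangle$ on concrete matrices with entries in $\mathbb{Z}[\tfrac{1+\sqrt5}{2}]$, and you citing the icosahedral rotation group (whose standard generators have entries in $\tfrac{1}{2}\mathbb{Z}[\tfrac{1+\sqrt5}{2}]$ rather than $\mathbb{Z}[\tfrac{1+\sqrt5}{2}]$, a harmless slip since only containment in $\mathbb{Q}(\sqrt5)\subset K$ is used); your observation that simplicity of $A_5$ forces injectivity of the composite into $\mathrm{PGL}_3(K)$ is the right way to finish.
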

\begin{proof}
If we have $A_5$, then we have an element of order 5, then $\sqrt{5}\in K$, by Proposition \ref{5.1}.

Conversely, if $\sqrt{5}\in K$, it's known that $$A_5\cong\left\langle x,y\mid x^5=y^2=(xy)^3=1\right\rangle,$$ according to \cite[p140, Beispiel 19.9]{Huppert}. Here $y$ plays the role of (12)(45), and $x$ plays the role of (12345).

Take 
\begin{equation}\label{1}
x=\begin{pmatrix} 
-\sigma & -\tau & -1\\
-\tau & -1 &-\sigma\\ 
1 & \sigma& \tau 
\end{pmatrix}, \ y=\begin{pmatrix}  1 &\sigma&\tau \\ \sigma& \tau & 1\\ \tau & 1 & \sigma \end{pmatrix},
\end{equation}
where $\sigma=\frac{1-\sqrt{5}}{2},\ \tau=\frac{1+\sqrt{5}}{2}$. Then $x$ and $y$ satisfy all the relations, hence they generate a group which is a quotient of $A_5$. It should be either $A_5$ or $\{1\}$ since~$A_5$ is a simple group. They cannot generate the trivial group because the order of $x$ is greater than 1.
\end{proof}

\begin{proposition}\label{5.6}
The group $\mathrm{PGL}_3(K)$ contains a subgroup of type (I) if and only if $\sqrt{-7}\in K$.
\end{proposition}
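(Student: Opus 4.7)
The plan is to separate necessity and sufficiency, following the pattern of the proof of Proposition \ref{5.4} for $A_5$.

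For necessity, assume $\mathrm{PGL}_3(K)$ contains a subgroup $H\cong \mathrm{PSL}_2(\mathbb{F}_7)$. Since $H$ is simple with $7\mid |H|$, it is generated by its order-$7$ elements (those form a conjugation-stable subset, so the subgroup they generate is a nontrivial normal subgroup). Because $\gcd(7,3)=1$, Lemma \ref{5.5} produces a finite subgroup $\tilde H\subset\mathrm{SL}_3(K)$ with $\pi(\tilde H)=H$, generated by order-$7$ lifts. Pick any such $\tilde g\in\tilde H$ of order $7$; then $\mathrm{tr}(\tilde g)\in K$. Embedding $K\hookrightarrow\mathbb{C}$, we view $\tilde H\subset\mathrm{SL}_3(\mathbb{C})$ as a $3$-dimensional linear representation. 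The kernel of $\tilde H\to H$ is a subgroup of the scalar matrices in $\mathrm{SL}_3(K)$, of order $1$ or $3$; since $H^2(\mathrm{PSL}_2(\mathbb{F}_7),C_3)=0$ (the Schur multiplier of $\mathrm{PSL}_2(\mathbb{F}_7)$ is $C_2$), any such central extension splits, so $\tilde H\cong H$ or $\tilde H\cong H\times C_3$. In both cases the restriction to the factor $H$ is a faithful $3$-dimensional representation, which must be one of the two irreducible $3$-dimensional complex representations of $\mathrm{PSL}_2(\mathbb{F}_7)$; their character values on any order-$7$ element are $(-1\pm\sqrt{-7})/2$. Hence $\mathrm{tr}(\tilde g)\in K$ forces $\sqrt{-7}\in K$.

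For sufficiency, assume $\sqrt{-7}\in K$. Set $\alpha=(-1+\sqrt{-7})/2$ and $\bar\alpha=(-1-\sqrt{-7})/2$; these lie in $K$ and equal $\zeta+\zeta^2+\zeta^4$, $\zeta^3+\zeta^5+\zeta^6$ for any primitive $7$-th root of unity $\zeta\in\bar K$. Proposition \ref{5.1} with $n=7$ and indices $(1,2,4)$ yields an order-$7$ element of $\mathrm{PGL}_3(K)$; an explicit representative is the companion matrix $A\in\mathrm{GL}_3(K)$ of $x^3-\alpha x^2+\bar\alpha x-1$. Using the presentation
$$\mathrm{PSL}_2(\mathbb{F}_7)\cong\langle a,b\mid a^2=b^3=(ab)^7=[a,b]^4=1\rangle,$$
it suffices to exhibit matrices $X,Y\in\mathrm{GL}_3(K)$ with entries in $\mathbb{Q}(\sqrt{-7})$ whose images in $\mathrm{PGL}_3(K)$ satisfy the corresponding projective relations and for which $\bar X\bar Y$ is conjugate to $\bar A$. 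The subgroup $\langle\bar X,\bar Y\rangle\subset\mathrm{PGL}_3(K)$ is then a quotient of the simple group $\mathrm{PSL}_2(\mathbb{F}_7)$, hence trivial or the full group; triviality is excluded because $\bar X\bar Y=\bar A$ has order $7$.

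The main obstacle is the explicit sufficiency step, specifically the construction of the involution $\bar X$ and the verification of $[\bar X,\bar Y]^4=1$, which is exactly what cuts the infinite $(2,3,7)$ triangle group down to $\mathrm{PSL}_2(\mathbb{F}_7)$. This reduces to a finite algebraic check that can be done directly in $\mathrm{PGL}_3(\mathbb{Q}(\sqrt{-7}))$; alternatively, it can be avoided by appealing to the classical fact (going back to Klein) that the irreducible $3$-dimensional complex representation of $\mathrm{PSL}_2(\mathbb{F}_7)$ has Schur index $1$ over its field of character values $\mathbb{Q}(\sqrt{-7})$, and therefore descends to a faithful representation $\mathrm{PSL}_2(\mathbb{F}_7)\hookrightarrow \mathrm{GL}_3(\mathbb{Q}(\sqrt{-7}))\subseteq\mathrm{GL}_3(K)$ whose composition with $\pi$ realizes the required subgroup of $\mathrm{PGL}_3(K)$.
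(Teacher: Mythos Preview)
Your proposal is correct and follows the same overall strategy as the paper: lift to $\mathrm{SL}_3(K)$ via Lemma~\ref{5.5}, identify the resulting $3$-dimensional representation of $\mathrm{PSL}_2(\mathbb{F}_7)$, and read off $\sqrt{-7}$ from the character table; for sufficiency, invoke the known realizability of this representation over $\mathbb{Q}(\sqrt{-7})$.

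The technical details differ in a few places. For the lift, the paper uses generation by order-$2$ elements (with $\gcd(2,3)=1$), while you use order-$7$ elements; both work. To see that $\tilde H$ actually contains a copy of $\mathrm{PSL}_2(\mathbb{F}_7)$ rather than a nonsplit cover, the paper invokes the classification of primitive subgroups of $\mathrm{SL}_3(\bar K)$ (Theorem~\ref{4.2}: the only group of order $504$ on the list is $C_3\times\mathrm{PSL}_2(\mathbb{F}_7)$), whereas you argue cohomologically via $H^2(\mathrm{PSL}_2(\mathbb{F}_7),C_3)=0$ from the Schur multiplier being $C_2$. Your route avoids dependence on the classification theorem, at the cost of quoting the Schur multiplier. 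For sufficiency, the paper simply cites explicit matrices from \cite{Elkies}; your Schur-index alternative is equivalent in spirit, though your first proposed route via the $(2,3,7)$ presentation is left as an unperformed computation and should not be regarded as a complete argument on its own. One small point: rather than ``embedding $K\hookrightarrow\mathbb{C}$'' (which is not literally available for fields of large transcendence degree), it is cleaner to work in $\bar K$ and note that the character values are algebraic, or to first pass to the subfield of $K$ generated by the matrix entries of $\tilde H$.
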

\begin{proof}
Note that $\mathrm{PSL}_2(\mathbb{F}_7)$ can be generated by its order-$2$ elements. Therefore, according to Lemma \ref{5.5} there is a finite subgroup $G\subset \mathrm{SL}_3(K)$ with $|G|$=168 or 504 which projects to $\mathrm{PSL}_2(\mathbb{F}_7)\subset \mathrm{PGL}_3(K)$. The only finite primitive subgroup of $\mathrm{SL}_3(K)$ (if there is any) with order 504 is $C_3\times \mathrm{PSL}_2(\mathbb{F}_7)$, so we always have a copy of $\mathrm{PSL}_2(\mathbb{F}_7)$ in $\mathrm{SL}_3(K)$. That is, we could give a three dimensional representation of $\mathrm{PSL}_2(\mathbb{F}_7)$ in $K$, which is clearly not a direct sum of three trivial one-dimensional representations. By checking the character table of $\mathrm{PSL}_2(\mathbb{F}_7)$ (see \cite[Section 1.1]{Elkies}), the square root of $-7$ will appear in $K$.

Conversely, formulas (1.6) and (1.7) in \cite{Elkies} give the explicit construction of~$\mathrm{PSL}_2(\mathbb{F}_7)$ when $\sqrt{-7}$ lies in $K$.
\end{proof}

\begin{proposition}\label{5.7}
The group $\mathrm{PGL}_3(K)$ contains a subgroup of type (L) if and only if $\sqrt{5}\in K$ and $\omega\in K$.
\end{proposition}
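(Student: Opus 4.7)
The plan is to handle the two directions separately.

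For the forward implication, I would observe that $A_6$ already contains enough structure to trigger earlier propositions. The point stabilizer of any letter in the natural action of $A_6$ on six letters is an embedded copy of $A_5$, so Proposition \ref{5.4} immediately gives $\sqrt{5}\in K$. Moreover, the two disjoint 3-cycles $(1\,2\,3)$ and $(4\,5\,6)$ commute and generate a copy of $C_3\times C_3$ inside $A_6$, so Lemma \ref{5.2} gives $\omega\in K$. This direction is essentially a two-line corollary of the results already in hand.

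For the converse, I would assume $\omega,\sqrt{5}\in K$, from which $\sqrt{-3}=2\omega+1\in K$ follows automatically. The goal is then to exhibit a finite subgroup of $\mathrm{PGL}_3(K)$ isomorphic to $A_6$, and my strategy is to realize the type-(L) subgroup of order $1080$ of Theorem \ref{4.2} explicitly inside $\mathrm{SL}_3(K)$. One begins with the matrices $x,y$ from (5.1), which already lie in $\mathrm{SL}_3(K)$ because $\sqrt{5}\in K$ and together generate the $A_5$ of type (H); one then adjoins one further generator whose entries lie in $\mathbb{Q}(\omega,\sqrt{5},\sqrt{-3})=\mathbb{Q}(\omega,\sqrt{5})\subseteq K$, imported from the classical Valentiner construction in \cite[Chapter XII]{Mi-Bl-Di} or \cite[Chapter 1]{Yau-Yu}. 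Projecting via $\pi$, Theorem \ref{4.2} guarantees that the image has order $1080/3=360$ with quotient $A_6$, so it is itself isomorphic to $A_6$.

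The hard part will be writing down and certifying the additional generator: one must check both that its entries lie in $\mathbb{Q}(\omega,\sqrt{5})$ and that it genuinely enlarges the $A_5$ (rather than its preimage in $\mathrm{SL}_3$) to the full Valentiner group, instead of to some intermediate subgroup. The uniqueness up to conjugacy of the triple cover of $A_6$ as a type-(L) subgroup of $\mathrm{SL}_3(\bar K)$ reduces this to a single finite verification once an explicit candidate is chosen, so no genuinely new ideas are needed beyond importing the construction from the classical literature.
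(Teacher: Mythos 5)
Your forward direction is correct and in one respect cleaner than the paper's. For $\sqrt{5}$ you argue exactly as the paper does (an $A_5$, hence an element of order $5$, sits inside $A_6$, so Proposition \ref{5.4} applies). For $\omega$, however, the paper takes a more roundabout route: it invokes Lemma \ref{5.5} to lift $A_6$ to a finite subgroup of $\mathrm{SL}_3(K)$ of order $360$ or $1080$, rules out order $360$ because $A_6$ has no nontrivial representation of dimension $\le 3$, and then extracts $\omega\in K$ from the order-$1080$ case. Your observation that $(1\,2\,3)$ and $(4\,5\,6)$ already give a $C_3\times C_3$ inside $A_6$, so that Lemma \ref{5.2} applies directly, is a genuinely shorter argument that avoids the representation-theoretic detour entirely.

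The converse is where your proposal stops short of a proof. Your strategy --- realize the order-$1080$ Valentiner group over $\mathbb{Q}(\omega,\sqrt{5})$ by adjoining one generator to the $A_5$ spanned by $x,y$ --- is exactly the paper's, but you defer the one step that carries the mathematical content: exhibiting the extra generator and certifying that the resulting group is all of $A_6$ rather than something intermediate (or infinite). The paper resolves both issues at once by writing down the concrete matrix
$$z=\begin{pmatrix} 1&0&0 \\ 0&0&\omega \\ 0&\omega^2&0 \end{pmatrix},$$
setting $a=y$ and $b=y(zx)^4$, and checking that $a,b$ satisfy the defining relations of a presentation of $A_6$; then $\langle a,b\rangle$ is a quotient of $A_6$, and simplicity forces it to be $A_6$ or trivial, the latter being excluded since $a\neq 1$. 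This presentation-plus-simplicity argument is precisely the device that disposes of your worry about landing in an intermediate subgroup, and it reduces the ``single finite verification'' you allude to, to checking a handful of matrix identities. As written, your converse is a plausible plan rather than a proof; to complete it you would need to supply the explicit generator (or the relations check) yourself rather than pointing to the classical literature.
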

\begin{proof}
If $\sqrt{5}$ and $\omega$ lie in $K$, consider the group presentation given by ATLAS of Finite Group Representations \cite{ATLAS}:
$$A_6\cong <a,b\mid a^2=b^4=(ab)^5=1>.$$ 
Here in our case $a=(12)(45), b=(1243)(56)$.

We may pick $$z=\begin{pmatrix} 1&0&0 \\ 0&0&\omega \\ 0&\omega^2&0 \end{pmatrix},$$ 
and set $a=y, b=y(zx)^4$, where $x$ and $y$ are the matrices defined in equation~(\ref{1}). Then $a$ and $b$ satisfy all the relations, hence they generate a group which is a quotient of $A_6$. It should be either $A_6$ or $\{1\}$ since $A_6$ is a simple group. They cannot generate the trivial group because the order of $a$ is greater than 1.

Conversely, if we have $A_6$ in $\mathrm{PGL}_3(K)$, we will have $\sqrt{5}\in K$ as $A_6$ contains~$A_5$. Moreover, from Lemma \ref{5.5} we know there is a finite subgroup~\mbox{$G\subset \mathrm{SL}_3(K)$} with $|G|$=360 or 1080 whose projection to $\mathrm{PGL}_3(K)$ is isomorphic to $A_6$. However, $\mathrm{SL}_3(\bar{K})$ does not have a finite subgroup isomorphic to $A_6$, since the minimal dimension for a nontrivial representation of $A_6$ is~5, according to \cite{GroupNamesA6}. One can also derive this from Theorem \ref{4.2} alternatively. Therefore~$|G|=1080$ and~\mbox{$\omega\in K$}.
\end{proof}

\section{Jordan constant of $\mathrm{PGL}_3(K)$}

In this section, we prove our main result Theorem \ref{thm1.4}.

\begin{lemma}\label{6.1}
Let $G$ be a finite subgroup of $\mathrm{PGL}_3(K)$.

1. If G is a group of type (C), then $J(G)\leq 3$.

2. If G is a group of type (D), then $J(G)\leq 6$.
\end{lemma}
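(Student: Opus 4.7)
The plan is to exhibit, in each case, the ``abelian part'' $A$ (the type (A) data in the generation of $G$) as an abelian normal subgroup of small index in $G$, and then read off the bound on $J(G)$ from that index. Since Jordan constants are isomorphism invariants, I would first replace $G$ by a conjugate in $\mathrm{PGL}_3(\bar K)$ so that, according to Corollary \ref{4.4} and Theorem \ref{4.2}, one may assume $G = \langle A, \bar T\rangle$ in the type (C) case and $G = \langle A, \bar T, \bar R'_{a,b,c}\rangle$ in the type (D) case, where $A$ consists of images of diagonal matrices and $\bar T$, $\bar R'_{a,b,c}$ denote the images of the explicit matrices $T$, $R'_{a,b,c}$ from Theorem \ref{4.2}.

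The crux of the argument is the pair of direct computations
\[ T\,\mathrm{diag}(x,y,z)\,T^{-1} = \mathrm{diag}(y,z,x),\qquad R'_{a,b,c}\,\mathrm{diag}(x,y,z)\,(R'_{a,b,c})^{-1} = \mathrm{diag}(x,z,y), \]
which show that each generator outside $A$ normalizes the full subgroup of diagonal matrices modulo scalars. Consequently $A$ is normalized by every generator of $G$, hence is normal in $G$; by hypothesis it is abelian. This is the heart of the proof and, really, the only step requiring verification.

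For claim 1, the quotient $G/A$ is generated by the image of $\bar T$, an element of order dividing $3$, so $[G:A] \leq 3$ and therefore $J(G) \leq 3$. For claim 2, the quotient $G/A$ is generated by the images of $\bar T$ and $\bar R'_{a,b,c}$, which act on the three diagonal coordinates as a $3$-cycle and a transposition respectively; hence $G/A$ embeds into the symmetric group $S_3$, giving $[G:A] \leq 6$ and $J(G) \leq 6$.

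There is no substantive obstacle beyond the normalization computations displayed above: once one checks that the explicit permutation-like generators conjugate diagonal matrices back to diagonal matrices, both bounds follow immediately from the elementary inequality $J(G) \leq [G:A]$ for any abelian normal subgroup $A \trianglelefteq G$.
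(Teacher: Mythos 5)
Your argument is correct and is essentially the paper's proof: the conjugation computations you display are exactly the content of the paper's surjection from $G$ onto (a subgroup of) $S_3$ given by the permutation action on the three coordinate lines, with abelian kernel of type (A). The only cosmetic refinement is that, rather than asserting the chosen generating subgroup $A$ itself is normal, one should take the kernel to be all of $G\cap \bar D$ (the full diagonal part of $G$ modulo scalars), which contains $A$, is abelian and normal by your computations, and still has index at most $3$ resp.\ $6$.
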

\begin{proof}
According to the construction given in Theorem \ref{4.2}, we have a surjective group homomorphism from a group of type (D) to $S_3$, with kernel of type (A), which is abelian, normal, and has index $|S_3|=6$. Similar arguments apply for groups of type (C).
\end{proof}

\begin{lemma}\label{6.2}


If $G$ is a group of type (G), then $J(G)=24$.
\end{lemma}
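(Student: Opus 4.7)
The plan is to leverage the explicit semidirect-product structure $G\cong (C_3\times C_3)\rtimes \mathrm{SL}_2(\mathbb{F}_3)$ supplied by Corollary \ref{4.4}. Write $M=C_3\times C_3$, which is abelian and normal in $G$ of index $216/9=24$. The upper bound $J(G)\le 24$ then comes for free: for any finite subgroup $H\subseteq G$, the intersection $H\cap M$ is abelian and normal in $H$, with $[H:H\cap M]=|HM|/|M|\le [G:M]=24$. So the real content is the matching lower bound $J(G)\ge 24$, which will follow once I show that $G$ itself admits no abelian normal subgroup of index smaller than $24$.

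To prove the lower bound, I will classify the abelian normal subgroups $N\trianglelefteq G$. The key observation is that the action of $\mathrm{SL}_2(\mathbb{F}_3)$ on $M\cong \mathbb{F}_3^2$ via the natural inclusion $\mathrm{SL}_2(\mathbb{F}_3)\hookrightarrow \mathrm{GL}_2(\mathbb{F}_3)$ is both faithful and irreducible: no line of $\mathbb{F}_3^2$ is preserved, as witnessed by any non-triangular element such as $\left(\begin{smallmatrix}0 & 1\\ -1 & 0\end{smallmatrix}\right)$. Hence $N\cap M$, being a $G$-invariant subgroup of $M$, must be either $\{e\}$ or all of $M$.

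In the case $N\supseteq M$, I write elements as pairs $(v,h)\in M\rtimes \mathrm{SL}_2(\mathbb{F}_3)$ and compute $(v,h)(a,1)(v,h)^{-1}=(h(a),1)$; commutativity of $N$ then forces $h(a)=a$ for every $a\in M$, and faithfulness gives $h=1$, so $N=M$ of index $24$. In the case $N\cap M=\{e\}$, normality of $N$ under conjugation by $M$ implies that for every $(v,h)\in N$ and every $(a,1)\in M$ the commutator $[(a,1),(v,h)]$ lies in $N\cap M$; a short semidirect-product computation identifies this commutator with $(a-h(a),1)$, which must therefore vanish for all $a$. As before this forces $h=1$, so $N\subseteq M$, and combined with the trivial intersection hypothesis we conclude $N=\{e\}$. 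Together these two cases show that $M$ is the unique maximal abelian normal subgroup of $G$, of index exactly $24$.

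The argument thus reduces to two short computations in the semidirect product $M\rtimes \mathrm{SL}_2(\mathbb{F}_3)$. The only mildly delicate point is the commutator formula in Case B, but once the multiplication rule $(v_1,h_1)(v_2,h_2)=(v_1+h_1(v_2),h_1h_2)$ is written down it is immediate, so I do not anticipate any substantial obstacle beyond organizing the two cases cleanly.
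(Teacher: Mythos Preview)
Your argument is correct. The semidirect-product calculations are right: in Case~A commutativity of $N$ forces $h(a)=a$ for all $a\in M$, and in Case~B the commutator $[(a,1),(v,h)]=(a-h(a),1)$ lands in $N\cap M=\{e\}$, again forcing $h=1$. Both conclusions rest on the faithfulness of the natural $\mathrm{SL}_2(\mathbb{F}_3)$-action on $\mathbb{F}_3^2$, and the preliminary dichotomy $N\cap M\in\{\{e\},M\}$ rests on its irreducibility; you state both facts and they are easily checked. Your upper-bound paragraph is also correct and is needed, since Definition~\ref{def1.1} requires a uniform bound over \emph{all} finite subgroups $H\le G$, not just $H=G$.

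The paper takes a different and much shorter route: it simply cites the GroupNames database for the assertion that $C_3\times C_3$ is the unique nontrivial abelian normal subgroup of the Hessian group, and reads off the index. Your approach proves this assertion directly from the structure $G\cong(C_3\times C_3)\rtimes\mathrm{SL}_2(\mathbb{F}_3)$ recorded in Corollary~\ref{4.4}, so it is self-contained and does not depend on an external database lookup. The trade-off is length: the paper's proof is one sentence, while yours requires the two case computations. If you want to keep your version, you might tighten it by noting at the outset that any abelian normal subgroup $N$ with $N\supsetneq M$ or $N\cap M=\{e\}$ must centralize $M$ (this is what both cases ultimately establish), and then observing that $C_G(M)=M$ because the $\mathrm{SL}_2(\mathbb{F}_3)$-action is faithful.
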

\begin{proof}
According to \cite{GroupNames}, the only non-trivial abelian normal subgroup of $G$ is~\mbox{$C_3\times C_3$}, whose index is 24.
\end{proof}

Now we are ready to prove Theorem \ref{thm1.4}:
\begin{proof}
Note that $\mathrm{PGL}_3(K)$ always contains $S_4$: the action of $S_4$ on $K^4$ via permutation of basis $\{e_i\}_{i=1,\ldots,4}$ gives rise to a four-dimensional representation of $S_4$. It has a three-dimensional invariant subspace spanned by $$\{e_1-e_2, e_2-e_3, e_3-e_4\}.$$ So $J(\mathrm{PGL}_3(K))$ is at least 6. 

As a result, we don't need to care about groups of type (A), (C) and (D) as group of type (A) is abelian, and Jordan constant of group of type (C) or (D) cannot be more than 6, which is shown in Lemma \ref{6.1}. For group of type (B), we have a complete result in Theorem \ref{thm1.3}. Also, subgroups of type (E) and (F) always appear together with group of type (G), and the group of type (G) gives the largest Jordan constant among them, so we can omit groups of type (E) and~(F).

The Jordan constants of all possible finite subgroups of $\mathrm{PGL}_3(K)$, excepting those excluded in the previous paragraph, are 24, 60, 168 and 360, coming from a finite group $G$ of type (G), (H), (I) and (L), respectively. Combining with Theorem \ref{thm1.3}, all the possible Jordan constants of $\mathrm{PGL}_3(K)$ are:~\mbox{6, 12, 24, 60, 168} and 360.
    

We start from the largest possible Jordan constant 360, given by $A_6$, and the criterion for existence of $A_6$ is given in Proposition \ref{5.7}. If 360 cannot be achieved, then we search for the group contributing Jordan constant 168, which is of type (I). Its existence is examined by Proposition \ref{5.6}. If~\mbox{$J(\mathrm{PGL}_3(K))<168$}, we next search for finite subgroup $G$ such that $J(G)=60$, which is given by Theorem \ref{thm1.3} and Proposition \ref{5.4}. However, the condition for Theorem \ref{thm1.3} is stronger, so we only require Proposition \ref{5.4}. Next, if~\mbox{$J(\mathrm{PGL}_3(K))<60$}, we look for the conditions such that $J(\mathrm{PGL}_3(K))=24$, which is given by Theorem \ref{thm1.3} and Proposition \ref{5.3}. If $J(\mathrm{PGL}_3(K))<24$, we then look for the conditions such that $J(\mathrm{PGL}_3(K))=12$. This Jordan constant can only be achieved by groups of type (B), and the criterion is stated in~\mbox{Theorem}~\ref{thm1.3}.
If none of above conditions is satisfied, we see that~\mbox{$J(\mathrm{PGL}_3(K))=6$}, according to the discussion in the first paragraph of the proof.

\end{proof}

\bibliographystyle{plain} 
\bibliography{main} 


\end{document}